\theoremstyle{plain}
\newtheorem{thm}{Theorem}
\newtheorem{prp}[thm]{Proposition}
\newtheorem{lemma}[thm]{Lemma}
\newtheorem{defin}{Definition}
\newtheorem{remark}[thm]{Remark}
\newcommand{\po}{{\mathbb P}}
\begin{document}
\title{Characterization of the law for 3D stochastic hyperviscous fluids}
\author{Benedetta Ferrario\footnote{Universit\`a di Pavia,
    Dipartimento di Matematica ''F. Casorati'', via
  Ferrata 5, 27100 Pavia, Italy. Email:
benedetta.ferrario@unipv.it}}
\date{\today}
\maketitle

\begin{abstract}
We consider the 3D hyperviscous Navier-Stokes equations in 
vorticity form, where the dissipative term $-\Delta
\vec \xi$ of the Navier-Stokes equations is substituted by $(-\Delta)^{1+c}
\vec \xi $. 
We investigate how big the  correction term $c$ has to be
in order to prove, by means of Girsanov transform, that the 
vorticity equations are equivalent (in law) to 
easier reference equations obtained by neglecting the stretching
term. 
This holds as soon as $c>\frac 12$, improving 
previous results obtained with
$c>\frac 32$ in a different setting in \cite{F,Kos}.
\end{abstract}
\noindent
{\bf MSC2010}: 76M35, 60H15, 35Q30.\\
{\bf Keywords}: Hyperviscous fluids, well-posedness, Girsanov formula.

\section{Introduction}
The stochastic Navier-Stokes equations, governing the motion of a
homogeneous and incompressible viscous fluid, are 
\begin{equation}\label{vel}
\begin{cases}
\dfrac{\partial \vec v}{\partial t}-\nu \Delta \vec v
  +(\vec v\cdot \nabla) \vec v+\nabla p  =\vec f +\vec n
\\ \nabla \cdot \vec v=0
\end{cases}
\end{equation}
where the unknown are the  velocity $\vec v$ and  the pressure $p$;
the data are the viscosity $\nu>0$,  the deterministic forcing
term $\vec f$ and the random one $\vec n$.

Working in a bounded three dimensional 
spatial domain with suitable boundary conditions,
it is known that for initial velocity of finite energy and suitable
forcing terms there exists a
weak solution  to \eqref{vel} defined for any positive time, 
but uniqueness is an open problem. On the other side, 
more regular initial velocities provide  existence and uniqueness
of a solution, which is only  local in time.
For these results we refer to \cite{temam}
for the deterministic equations (the case $\vec n=\vec 0$) and to \cite{cime}
for the stochastic ones (the case $\vec n\neq \vec 0$).

However, suitable modifications of the first equation in \eqref{vel} provide
better results.
Let us consider 
the hyperviscous model 
\begin{equation}\label{NS}
\begin{cases}
\dfrac{\partial \vec v}{\partial t}+\nu (-\Delta)^{1+ c } \vec v
  +(\vec v\cdot \nabla) \vec v+\nabla p = \vec f + \vec n 
\\ \nabla \cdot \vec v=0
\end{cases}
\end{equation}
We consider $c>0$, whereas it reduces to the Navier-Stokes system for $c=0$.
This model is widely used in computer simulations (see e.g. \cite{Fr}, \cite{SMC} and references therein).
It turns out that large enough values of the parameter $c$
 provide better 
mathematical properties of system \eqref{NS}.

As far as the well posedness of \eqref{NS} is concerned, 
the condition $c\ge \frac 14$ allows to prove that  there exists a unique
global solution for the hyperviscous  Navier-Stokes equations
\eqref{NS}.
This is based on the fact that  the operator
$(-\Delta)^{1+c}$ has a more regularizing effect than the
Laplacian itself and $c\ge \frac 14$ provides a sufficient regularity
to prove uniqueness of the global weak solution.
The result has been proved first for
 integer values of $ c \ge 1$, 
  both in the stochastic (see \cite{sri}) and deterministic case
(see \cite{lady}).
Then, these results have been improved allowing $ c $ to be non integer
(see \cite{fe-proc} for
the stochastic case and \cite{MS} for the deterministic one).

A further question concerns the characterization of the law of the
process solving \eqref{NS} with a stochastic force.
When $\vec f=\vec 0$ and $\vec n$ 
is a Gaussian random field, white in time
and coloured in space, 
Gallavotti (see \cite{Gal}, Ch 6.1) suggested 
to use Girsanov transform to relate the law of the stochastic Navier-Stokes
equations
with that of the stochastic Stokes equations, which are linear equations
obtained from the Navier-Stokes ones by neglecting
the non linear term $(\vec v\cdot \nabla) \vec v$. The formula given in
\cite{Gal} when $c=0$ is formal, but this idea can be used also for
the hyperviscous fluids. Actually, a rigorous result  has been
proved  in \cite{Kos}, \cite{F}:  for $c>\frac
32$  the law of the process $\vec v$ solving 
\begin{equation}\label{sis-nonlin}
\begin{cases}
\dfrac{\partial \vec v}{\partial t}+\nu (-\Delta)^{1+ c } \vec v
  +(\vec v\cdot \nabla) \vec v+\nabla p =  \vec n 
\\ \nabla \cdot \vec v=0
\end{cases}
\end{equation}
is equivalent to
the law of the process $\vec z$ solving the stochastic
hyperviscous Stokes  system
\begin{equation}\label{sis-lin}
\begin{cases}
\dfrac{\partial \vec z}{\partial t}+\nu (-\Delta)^{1+ c } \vec z
  +\nabla p  =\vec n
\\ \nabla \cdot \vec z=0
\end{cases}
\end{equation}
This holds in the 2D and  in the 3D setting and implies that all
what holds a.s. for the hyperviscous Stokes problem \eqref{sis-lin} holds a.s. 
for the hyperviscous Navier-Stokes problem \eqref{sis-nonlin} as well. In other words: 
the advection term $(\vec v\cdot \nabla) \vec v$ takes second place 
to the dissipative term $(-\Delta)^{1+ c } \vec v$ for $c$ large enough.
This means that  hyperviscosity with $c>\frac 32$ 
 changes drastically the nature of the 
equations of motion of the fluid. 
This remark already appeared  in \cite{Fr}, where the authors 
discuss {\it artifacts} arising in numerical simulation of hyperviscous
fluids.
The mathematical  representation of the law of
$\vec v$ by means of Girsanov transform, which reduces the analysis of the
law of $\vec v$ to the analysis of the law of the linear problem for
$\vec z$, gives evidence in support of the fact that hyperviscous fluid models  with $c>\frac 32$ are far
away from the real turbulent fluids.

But, what happens for smaller values
of the correction term, i.e. for $c\le \frac 32$? 
To answer this question,
we change the auxiliary process.
First of all we write the Navier-Stokes system in
vorticity form
\begin{equation}\label{vorti}
\begin{cases}
\dfrac{\partial \vec \xi}{\partial t}+\nu (-\Delta)^{1+ c }\vec \xi 
   + (\vec v\cdot \nabla)\vec \xi -  (\vec \xi \cdot \nabla) \vec v
= \nabla \times \vec  n\\
\nabla \cdot \vec v=0\\
\vec \xi= \nabla \times \vec v
\end{cases}
\end{equation}
Notice that  the first equation can be rewritten as
\[
\dfrac{\partial \vec \xi}{\partial t}+\nu (-\Delta)^{1+ c }\vec \xi 
   + P[(\vec v\cdot \nabla)\vec \xi] -  P[(\vec \xi \cdot \nabla) \vec v]
= \nabla \times \vec  n
\]
where $P$ is the projection operator onto the space of divergence free
vector fields (see details in Section \ref{sec:ma}).

The idea is to simplify the vorticity equation by neglecting only the 
vorticity stretching term, getting
\begin{equation}\label{sis-eta}
\begin{cases}
\dfrac{\partial \vec \eta}{\partial t}+\nu (-\Delta)^{1+ c }\vec \eta 
   + P[(\vec v\cdot \nabla)\vec \eta]
= \nabla \times \vec  n\\
\nabla \cdot \vec v=0\\
\vec \eta= \nabla \times \vec v
\end{cases}
\end{equation}
This system has the same structure as the 2D vorticity system, but we consider it in the 3D setting.
Indeed, in the 2D setting the vorticity is a vector 
orthogonal to the plane where the fluid moves and 
therefore the  term $(\vec \xi \cdot \nabla) \vec v$ vanishes.
Therefore, systems \eqref{vorti} and \eqref{sis-eta}  
are different only in the 3D setting. Let us compare them.

From the mathematical point of view we shall prove that 
system \eqref{sis-eta} is well posed for any $c\ge 0$, whereas 
the well posedness of the full system \eqref{vorti}
has been proved by assuming $c\ge \frac 14$.

On the other hand, 
the vorticity stretching term
$(\vec \xi\cdot \nabla)\vec v$ 
 is essential in 3D fluids (see e.g. \cite{Frisch} Ch 9);
it is responsible of the peculiar features of
3D turbulence, which is very different from and more involved than 2D
turbulence.
Thus one expects the dynamics of 
\[
\begin{cases}
\dfrac{\partial \vec \xi}{\partial t}-\nu\Delta\vec \xi 
   + P[(\vec v\cdot \nabla)\vec \xi -  (\vec \xi \cdot \nabla) \vec v]
= \nabla \times \vec  n\\
\nabla \cdot \vec v=0\\
\vec \xi= \nabla \times \vec v
\end{cases}
\]
to be very different from that of
\[
\begin{cases}
\dfrac{\partial \vec \eta}{\partial t}-\nu \Delta\vec \eta 
   + P[(\vec v\cdot \nabla)\vec \eta]
= \nabla \times \vec  n\\
\nabla \cdot \vec v=0\\
\vec \eta= \nabla \times \vec v
\end{cases}
\]

Now, the question is:
what happens if we introduce  hyperviscosity 
$(-\Delta)^{1+c}$?
Our main theorem states the
equivalence of laws of the solution processes of systems \eqref{vorti}
and \eqref{sis-eta}
under the assumption $ c  > \frac{1}{2}$. 
Again our result gives evidence that the hyperviscous models with
$c>\frac 12$ do not represent well the real 3D turbulence, since the
effect of the vorticity stretching term are not relevant when $c>\frac
12$.

Finally, we present this paper. In the next section we define the
functional spaces and the noise term.  Section \ref{sec-stime}
presents various technical results. Then we start to analyze the 
 main equations: the linear problem in Section \ref{sec-ou}, the
 auxiliary problem \eqref{sis-eta} in Section \ref{sec-eta} and the full
 vorticity problem \eqref{vorti} in Section \ref{sec-vor}. 
The main result on the equivalence of the laws is proved in Section \ref{s:equiv}.

\section{Mathematical setting}\label{sec:ma}
We denote  a 3D vector as $\vec k=(k^{(1)},k^{(2)},k^{(3)})$; 
we define $\mathbb Z^3_0=\mathbb Z^3\setminus \{\vec
0\}$ and $\mathbb Z^3_+=
\{k^{(1)}>0\}\cup \{k^{(1)}=0,k^{(2)}>0\}\cup\{k^{(1)}=0,k^{(2)}=0,k^{(3)}>0\}$. 
Then for any $\vec k \in \mathbb Z^3_0$, there exist two unit 
vectors $\vec b_{\vec k,1}$ and $\vec b_{\vec k,2}$, 
orthogonal to each other and belonging to
the plane orthogonal to $\vec k$; we choose these vectors in such a
way that $(\vec b_{\vec k,1}, \vec b_{\vec k,2}, \frac {\vec k}{|\vec k|})$ 
is a right-handed orthonormal frame and 
$\vec b_{\vec k,j}=-\vec b_{-\vec k,j}$.

We work on the 3D torus, that is we deal with functions defined on
$\mathbb R^3$ and $[-\pi,\pi]^3$-periodic.
We set $D=[-\pi,\pi]^3$.
As usual, in the periodic case we assume that the mean value of the
vectors we are dealing with is zero. 
This gives a simplification in the mathematical treatment, but it does
not prevent to consider non zero  mean value vectors. Actually, if we
can analyse the problem for zero mean vectors then the problem without
this assumption can be dealt with in a similar way
(see \cite{temam2}).

The velocity
vector $\vec v$ is divergence free by assumption and the vorticity
vector $\vec \xi$ is divergence free by construction. We can
write any zero mean, periodic, divergence
free vector $\vec u$ in Fourier series as 
\[
\vec u(\vec x)=
\sum_{\vec k\in \mathbb Z^3_0} [u_{\vec k,1}\vec b_{\vec k,1}
   +u_{\vec k,2}\vec b_{\vec k,2}] e^{i \vec k \cdot \vec x}, \qquad
\vec x \in \mathbb R^3
\]
where $u_{\vec k,1},u_{\vec k,2} \in \mathbb C$, 
with the condition  $\overline u_{\vec k,j}=-u_{-\vec k,j}$
in order to have a real vector $\vec u(\vec x)$. 

When needed, we use the notation $\vec v$ and $\vec \xi$ to make
precise that we deal with the velocity or vorticity vector. For
instance, we have $\vec \xi=\nabla \times \vec v$, but we can also
 express the velocity in
terms of the vorticity, solving
\begin{equation}\label{vel-vort}
\begin{cases}
-\Delta \vec v=\nabla \times \vec \xi\\
\nabla\cdot \vec v=0\\
\vec v \text{ periodic}
\end{cases}
\end{equation}
More explicitly
\begin{multline}\label{xi-e-v}
\vec \xi(\vec x)= \sum_{\vec k \in \mathbb Z^3_0}
(\xi_{\vec k,1}\vec b_{\vec k,1}+\xi_{\vec k,2}\vec b_{\vec k,2})e^{i
  \vec k \cdot \vec x}
\\\Longrightarrow\quad
\vec v(\vec x)=i\sum_{\vec k \in \mathbb Z^3_0}\frac 1{|\vec k|}
(\xi_{\vec k,1}\vec b_{\vec k,2}-\xi_{\vec k,2}\vec b_{\vec k,1})e^{i
  \vec k \cdot \vec x}
\end{multline}

We now define the functional spaces.
Let $L_2$ denote the subspace of $[L^2(D )]^3$ consisting of 
zero mean, periodic, divergence
free vectors (this condition has to be understood in the
distributional sense):
\[
L_2=\Big\{\vec u(\vec x)=
\sum_{\vec k\in \mathbb Z^3_0} [u_{\vec k,1}\vec b_{\vec k,1}
   +u_{\vec k,2}\vec b_{\vec k,2}] e^{i \vec k \cdot \vec x}:
  \sum_{\vec k\in \mathbb Z^3_0} 
(|u_{\vec k,1}|^2+|u_{\vec k,2}|^2)<\infty\Big\}
\]
This is a Hilbert space with scalar product
\[
\langle \vec u, \vec v \rangle= (2\pi)^3 
 \sum_{\vec k\in \mathbb Z^3_0} 
(u_{\vec k,1} \overline v_{\vec k,1}+u_{\vec k,2} \overline v_{\vec k,2})
\]
The space $L_2$ is a closed subspace of $[L^2(D )]^3$; 
we decide to put the subindex in $L_2$  in order to distinguish them.

Moreover, for any integer $n$  we define the projection operator 
$\Pi_n$ as a linear bounded operator in $L_2$ such that
\[
\Pi_n \left(\sum_{\vec k\in \mathbb Z^3_0} [u_{\vec k,1}\vec b_{\vec k,1}
   +u_{\vec k,2}\vec b_{\vec k,2}] e^{i \vec k \cdot \vec x}\right)
= \sum_{0<|\vec k|\le n} [u_{\vec k,1}\vec b_{\vec k,1}
   +u_{\vec k,2}\vec b_{\vec k,2}] e^{i \vec k \cdot \vec x}
\]
and we set $H_n=\Pi_n L_2$.

For  $p>2$ we define the Banach spaces 
\[
 L_p= L_2\cap  [L^p(D )]^3
\]
These are Banach spaces with norms inherited from  $[L^p(D )]^3$.\\
We denote by $P$ the projection operator from $[L^p(D )]^3$ onto
$L_p$. We have that $P[(\vec v\cdot \nabla)\vec \xi-
(\vec \xi\cdot \nabla)\vec v]=0$.
Indeed, the vorticity transport term $(\vec v\cdot \nabla)\vec \xi$ and the 
vorticity stretching term $(\vec \xi \cdot \nabla) \vec v$ 
are not divergence free vector fields; so
$P[(\vec v\cdot \nabla)\vec \xi]\neq (\vec v\cdot \nabla)\vec \xi$ and 
$P[(\vec \xi \cdot \nabla) \vec v]\neq (\vec \xi \cdot \nabla) \vec v$.
However, their difference is divergence free, being given by the curl form
$\nabla \times [(\vec v \cdot \nabla )\vec v]$.
Moreover, if $\vec\phi$ is a divergence free vector field (i.e. $P\vec \phi=\vec \phi$), then
\[
\langle P[(\vec \xi \cdot \nabla) \vec v],\vec \phi\rangle 
=
 \langle (\vec \xi \cdot \nabla) \vec v,\vec \phi\rangle 
\]

For any $a\in \mathbb R$ we define the fractional 
powers of the Laplace operator; 
formally, if
\[
\vec u(\vec x)=
\sum_{\vec k\in \mathbb Z^3_0} [u_{\vec k,1}\vec b_{\vec k,1}
   +u_{\vec k,2}\vec b_{\vec k,2}] e^{i \vec k \cdot \vec x}
\]
then
\[
 (-\Delta)^a \vec u(\vec x) =\sum_{\vec k\in \mathbb Z^3_0} 
|\vec k|^{2a}[u_{\vec k,1}\vec b_{\vec k,1}
   +u_{\vec k,2}\vec b_{\vec k,2}] e^{i \vec k \cdot \vec x}
\]
Thus, for $b\in \mathbb R$  we define the Hilbert spaces
\[
 H^b=\{\vec u (\vec x)=
\sum_{\vec k\in \mathbb Z^3_0} [u_{\vec k,1}\vec b_{\vec k,1}
   +u_{\vec k,2}\vec b_{\vec k,2}] e^{i \vec k \cdot \vec x}: 
\sum_{\vec k\in \mathbb Z^3_0} |\vec k|^{2b} 
(|u_{\vec k,1}|^2+|u_{\vec k,2}|^2) <\infty\}
\]
with scalar product 
\[
\langle \vec u, \vec v \rangle_b= (2\pi)^3 
 \sum_{\vec k\in \mathbb Z^3_0} |\vec k|^{2b}
(u_{\vec k,1} \overline v_{\vec k,1}+u_{\vec k,2} \overline v_{\vec
   k,2})\equiv 
\langle (-\Delta)^{\frac b2}\vec u, (-\Delta)^{\frac b2}\vec v \rangle
\]
The duality between $H^b$ and $H^{-b}$ 
(or between $[H^b(D)]^3$ and $[H^{-b}(D)]^3$)
is again denoted by $\langle\cdot, \cdot \rangle$.

For $b>0$ and  $p>2$, we define the generalized Sobolev spaces $H^b_p$ 
\[
 H^b_p=\{\vec u \in L_p: (-\Delta)^{\frac b2}\vec u \in L_p\}
\]
which are Banach spaces with norms
\[
 \|\vec u\|_{H^b_p}=\|(-\Delta)^{\frac b2}\vec u\|_{L_p}
\]
When $b \in \mathbb N$, $H^b_p$ are the  Sobolev spaces.
We recall the Sobolev embedding theorem (see \cite{Sob} Ch 1 \S 8)
\begin{itemize}
\item
if $1<p<q<\infty$ with $\frac 1q=\frac 1p-\frac{a-b}3$, then the following inclusion holds
\[
H^a_p\subset H^b_q
\]
and there exists a constant $C$ (depending on $a-b,p,q$) such that
\[
\|\vec v \|_{H^b_q}\le C \|\vec v\|_{H^a_p}
\]
\item
if $1<p<\infty$ with $ 3 <ap$, then the following inclusion holds
\[
H^a_p\subset L_\infty
\]
and there exists a constant $C$ (depending on $a,p$) such that
\[
\|\vec v \|_{L_\infty} \le C \|\vec v\|_{H^a_p}
\]

\end{itemize}

The Poincar\'e inequality holds, because of the zero mean value
assumption, and therefore $\|\vec u\|_{H^b_p}$ is equivalent to 
$( \|\vec u\|^p_{L_p}+\|\vec u\|^p_{H^b_p})^{1/p}$, which appears usually in
the definition of the generalized Sobolev spaces.

Moreover for $\vec \xi=\nabla \times \vec v$,  the norms 
$\|\vec v\|_{H^b_p}$ and $\|\vec \xi\|_{H^{b-1}_p}$
are equivalent (see \eqref{xi-e-v}).

For any $t>0$ and $b>0$, the linear operator $e^{-t(-\Delta)^b}$, formally
defined as
\[
 e^{-t(-\Delta)^b}\left(\sum_{\vec k\in \mathbb Z^3_0} [u_{\vec k,1}\vec b_{\vec k,1}
   +u_{\vec k,2}\vec b_{\vec k,2}] e^{i \vec k \cdot \vec x}\right) 
=
\sum_{\vec k\in \mathbb Z^3_0}e^{-t|\vec k|^{2b}} [u_{\vec k,1}\vec b_{\vec k,1}
   +u_{\vec k,2}\vec b_{\vec k,2}] e^{i \vec k \cdot \vec x}
\]
is a contraction operator in $L_p$ for any $p\ge 2$.

\medskip

Next, we define the random forcing term.
We consider a noise $d\vec n$ of the form 
$d(-\Delta)^{-b}\vec w$, where $\vec w$ 
is a cylindrical Wiener process in $L_2$ (see,
e.g., \cite{dpz}). 
We can represent it as follows.
Suppose we are given  a Brownian stochastic basis, i.e.
a probability space $\left(\Omega,\mathcal{F},\po\right)$ and 
a filtration $( \mathcal{F}_{t})_{t\geq0}$; we denote by $\mathbb E$
the mathematical expectation with respect to $\po$.
Let $\{\beta_{\vec k,1}, \beta_{\vec k,2}\}_{\vec k\in \mathbb Z^3_{+}}$ 
be a double sequence of complex valued independent
Brownian motions on $\left(\Omega,\mathcal{F},\left(\mathcal{F}_{t}\right)
_{t\geq0},\po\right)  $; namely, the sequence  
$\{\Re\beta_{\vec k,j},\Im\beta_{\vec k,j}\}_{\vec k \in \mathbb
  Z^3_{+};j=1,2}$
consists of real valued processes that are 
independent, adapted to $\left(  \mathcal{F}_{t}\right)  _{t\geq0}$, 
continuous for $t \ge 0$ and null at $t=0$, 
with increments  on  any time interval $[s,t]$
that are $N\left(  0,t-s \right)  $-distributed
and independent of $\mathcal{F}_{s}$.

Moreover, for $-\vec k \in \mathbb Z^3_{+}$ 
 let $\beta_{\vec k,j}=-\overline \beta_{-\vec k,j} $.
Then  
\begin{equation} \label{cilindrico}
 \vec w(t,\vec x)= \sum_{\vec k\in \mathbb Z^3_0} 
 [\vec b_{\vec k,1}\beta_{\vec k,1}(t) +\vec b_{\vec
     k,2}\beta_{\vec k,2}(t)]e^{i\vec k \cdot \vec x}
\end{equation} 
is a cylindrical Wiener process in $L_2$. Its paths do not live in the
space $C(\mathbb R_+;L_2)$; they are less regular in space. Indeed
\[
\mathbb E\|(-\Delta)^a \vec w(t)\|_{L_2}^2
=2 t \sum_{\vec k\in \mathbb Z^3_0} |\vec k|^{2a}
\]
which is finite if and only if $a<-\frac 32$.

Within this setting, we write  system \eqref{vorti} for the vorticity as
\begin{equation}\label{vort}
\begin{cases}
d\vec \xi+\left( (-\Delta)^{1+ c }\vec \xi 
   + P[(\vec v\cdot \nabla)\vec \xi] - P[(\vec \xi \cdot \nabla) \vec
     v]\ \right)
\ dt
= (-\Delta)^{-b} d\vec w\\
\nabla \cdot \vec v=0\\
\vec \xi= \nabla \times \vec v
\end{cases}
\end{equation}
We have put $\nu=1$ for simplicity and consider $b, c\ge 0$.

We give the following definition of solution: this is a weak (or
distributional) solution 
from the point of view of PDE's and a strong solution from the point
of view of stochastic equations.
\begin{defin}
Given $\left(\Omega,\mathcal{F},(\mathcal{F}_{t})_{t\ge 0},
\po\right)$ and an $L_2$-cylindrical Wiener process $\vec w$,
we say that a process $\vec \xi$ is a basic solution to system
\eqref{vort}  on the finite time interval $[0,T]$ with initial
condition $\vec \xi(0)=\vec \xi_0\in L_2$ if 
\begin{equation}\label{equ-def}
\vec \xi \in C([0,T];L_2)\cap L^1(0,T;L_3) \qquad \po-a.s.
\end{equation}
and it satisfies the first equation of \eqref{vort} in the following
sense:\\
for any $t\in [0,T]$, for any $\vec \phi \in H^{2+2c}\cap H^{4-2b}$
\begin{multline}
\langle \vec \xi(t),\vec \phi\rangle
+\int_0^t \langle\vec \xi(s),(-\Delta)^{1+ c }\vec \phi \rangle ds
-\int_0^t \langle(\vec v(s)\cdot \nabla)\vec \phi,\vec  \xi(s)\rangle ds
\\+\int_0^t \langle (\vec \xi(s) \cdot \nabla) \vec \phi, \vec v(s)\rangle ds
=\langle \vec \xi_0,\vec \phi\rangle+
\langle (-\Delta)^{-2}\vec w(t), (-\Delta)^{2-b}\vec \phi\rangle
\end{multline}
$\po$-a.s.
\end{defin}

The latter relationship is obtained by multiplying 
the first equation of \eqref{vort}  by $\vec \phi$, integrating in space and time and finally by 
integration by part in the trilinear terms. Indeed, 
$-\langle(\vec v(s)\cdot \nabla)\vec  \phi,\vec \xi(s)\rangle=
\langle(\vec v(s)\cdot \nabla)\vec \xi(s),\vec  \phi\rangle
=\langle P[(\vec v(s)\cdot \nabla)\vec \xi(s)],\vec
\phi\rangle$ and
$\langle (\vec \xi(s) \cdot \nabla) \vec \phi, \vec v(s)\rangle
=- \langle (\vec \xi(s) \cdot \nabla) \vec v(s), \vec \phi\rangle
= -\langle P[(\vec \xi(s) \cdot \nabla) \vec v(s)], \vec \phi\rangle$,
since $\vec \phi$ is a divergence free vector.

\begin{remark}
We remark that all the terms in \eqref{equ-def} are
meaningful. 
We show the basic estimates for the trilinear terms, by means of H\"older and Sobolev inequalities:
\[\begin{split}
\left|\int_0^t \langle(\vec v(s)\cdot \nabla)\vec \phi,\vec \xi(s)\rangle
ds \right|
&\le 
\|\vec \phi\|_{H^1}\int_0^t \|\vec v(s)\|_{L_6}\|\vec \xi(s)\|_{L_3}ds
\\&\le C
\|\vec \phi\|_{H^1}\int_0^t \|\vec v(s)\|_{H^1}\|\vec \xi(s)\|_{L_3}ds
\\&\le C
\|\vec \phi\|_{H^1}\int_0^t \|\vec \xi(s)\|_{L_2}\|\vec \xi(s)\|_{L_3}ds
\\&\le C
\|\vec \phi\|_{H^1} \|\vec \xi\|_{L^\infty(0,T;L_2)} 
 \|\vec \xi\|_{L^1(0,T;L_3)}
\end{split}
\]
and similarly
\[\begin{split}
\left|\int_0^t \langle (\vec \xi(s) \cdot \nabla) \vec \phi, 
\vec v(s)\rangle ds\right|
&\le C
\|\vec \phi\|_{H^1}\int_0^t \|\vec \xi(s)\|_{L_3}\|\vec v(s)\|_{L_6}ds
\\&\le C
\|\vec \phi\|_{H^1} 
 \|\vec \xi\|_{L^1(0,T;L_3)} \|\vec \xi\|_{L^\infty(0,T;L_2)} 
\end{split}
\]
\end{remark}

Here and in the following, 
we denote by $C$ a generic constant, which may vary from
line to line. However a subscript denotes that the constant depends on
the specified parameters.

\begin{remark}
To prove the well posedness of system \eqref{vort},
we shall exploit the pathwise technique used the first time in \cite{BT} 
and later on in a more useful way in \cite{fla-diss}. We shall transform the 
stochastic equation of It\^o type \eqref{vort} into a random equation 
which behaves like a deterministic equation when studied for $\po$-a.e. 
$\omega \in \Omega$, that is we find estimates for the paths of the 
solution process.

The solution process will enjoy more properties as a stochastic process; 
as in the 2D setting, we shall prove pathwise uniqueness and continuous 
dependence on the initial data in $L_2$. Thus our solution will be a strong 
solution from the point of view of stochastic differential equations (see 
e.g. \cite{IW}), and a Feller and Markov process in $L_2$. 
For these details, see \cite{cime} and references therein.
\end{remark}

\section{Estimates of the nonlinearities}\label{sec-stime}
This is a technical section, where
 we present the estimates to be used in proving the well posedness of
 system \eqref{vort} and \eqref{sis-eta}.

First, we present a classical result.
\begin{lemma}
Let $\vec u, \vec v, \vec w:\mathbb R^3 \to \mathbb
R^3$ be smooth $D$-periodic and divergence free vector fields.
Then 
\begin{equation}\label{trilin}
\langle P[(\vec u\cdot \nabla)\vec v],\vec w\rangle=
-\langle P[(\vec u\cdot \nabla)\vec w],\vec v\rangle
\end{equation}
In particular
\begin{equation}\label{trilin-0}
\langle P[(\vec u\cdot \nabla)\vec v],\vec v\rangle=0
\end{equation}
\end{lemma}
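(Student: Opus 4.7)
The plan is to reduce the claim to a standard divergence-free integration-by-parts identity. The first observation is that since $\vec w$ is divergence free, $P\vec w=\vec w$, and because $P$ is the $L_2$-orthogonal projector onto divergence-free vector fields, we have
\[
\langle P[(\vec u\cdot\nabla)\vec v],\vec w\rangle
=\langle (\vec u\cdot\nabla)\vec v,P\vec w\rangle
=\langle (\vec u\cdot\nabla)\vec v,\vec w\rangle.
\]
The same remark applies on the right-hand side because $\vec v$ is divergence free. Thus it suffices to prove the unprojected identity
\[
\langle (\vec u\cdot\nabla)\vec v,\vec w\rangle
=-\langle (\vec u\cdot\nabla)\vec w,\vec v\rangle.
\]

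Next I would expand componentwise. Writing $\vec u=(u^{(1)},u^{(2)},u^{(3)})$ and similarly for $\vec v,\vec w$, the left-hand side equals
\[
\sum_{i,j=1}^{3}\int_{D} u^{(j)}\,(\partial_j v^{(i)})\,w^{(i)}\,d\vec x.
\]
Now I would integrate by parts in the variable $x_j$. Since $\vec u,\vec v,\vec w$ are smooth and $D$-periodic, the boundary terms vanish, and this gives
\[
-\sum_{i,j=1}^{3}\int_{D}\Big[(\partial_j u^{(j)})\,w^{(i)}\,v^{(i)}
+u^{(j)}\,(\partial_j w^{(i)})\,v^{(i)}\Big]d\vec x.
\]
The first sum vanishes, because $\sum_j\partial_j u^{(j)}=\nabla\cdot\vec u=0$ gives
\[
\sum_{i,j}\int_D(\partial_j u^{(j)})w^{(i)}v^{(i)}\,d\vec x
=\int_D(\nabla\cdot\vec u)\,(\vec w\cdot\vec v)\,d\vec x=0.
\]
What remains is precisely $-\langle(\vec u\cdot\nabla)\vec w,\vec v\rangle$, which after reinserting $P$ (as explained above) proves \eqref{trilin}.

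For the in-particular statement \eqref{trilin-0}, I would simply take $\vec w=\vec v$ in \eqref{trilin}, obtaining $\langle P[(\vec u\cdot\nabla)\vec v],\vec v\rangle=-\langle P[(\vec u\cdot\nabla)\vec v],\vec v\rangle$, so the quantity vanishes. There is no real obstacle here: the only things to keep track of are that smoothness plus periodicity justifies the integration by parts without boundary terms, and that the divergence-free hypotheses are used in two distinct places, once to remove the projector $P$ and once to kill the $(\partial_j u^{(j)})$ term.
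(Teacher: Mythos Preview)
Your proof is correct and follows exactly the same approach as the paper: remove the projector using that $\vec w$ (resp.\ $\vec v$) is divergence free, expand componentwise, integrate by parts with vanishing boundary terms by periodicity, and use $\nabla\cdot\vec u=0$ to kill the extra term; then set $\vec w=\vec v$ for \eqref{trilin-0}. The only difference is that you spell out the integration-by-parts step in detail whereas the paper simply says ``by integration by parts.''
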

\begin{proof}
First
\[
\langle P[(\vec u\cdot \nabla)\vec v],\vec w\rangle=
\langle (\vec u\cdot \nabla)\vec v,\vec w\rangle=
\sum_{i,j=1}^3\int_{D}
u^{(i)}(\vec x) \partial_i v^{(j)}(\vec x) w^{(j)}(\vec x) \ d\vec x
\]
Then by integration by parts we get \eqref{trilin}. The relationship 
\eqref{trilin-0} is obtained from \eqref{trilin} by taking $\vec
w=\vec v$.
\end{proof}
By density, the above results hold for all vectors giving meaning to
the above expressions. One can find estimates on the trilinear term in \cite{temam2}. Here we present particular estimates, not included in \cite{temam2}, and useful in the sequel.
Their proofs are based on Sobolev embeddings theorems
and  H\"older inequalities. 
\begin{lemma}\label{lemmm}
Let $ c \ge 0$.
Then there exists a positive constant $C$ (depending on $c$) such that for any $\epsilon>0$ we have
\begin{equation}\label{st1}
|\langle (\vec u_1\cdot  \nabla) \vec u_2, \vec u_3 \rangle|
\le
\epsilon \|\vec u_2\|_{H^{1+c}}^2
+ \frac {C}{ \epsilon}
\|\vec u_1\|_{H^{1}}^2 \|\vec u_3\|^2_{L_3} 
\end{equation}
\begin{equation}\label{st3}
|\langle (\vec u_1\cdot  \nabla) \vec u_2, \vec u_3 \rangle|
\le
\epsilon \|\vec u_3\|_{H^{1+c}}^2
+ \frac {C}{ \epsilon}
 \|\vec u_1\|^2_{L_3} \|\vec u_2\|_{H^{1+c}}^2
\end{equation}
\begin{equation}\label{st2}
|\langle (\vec u_1 \cdot \nabla )\vec u_2, \vec u_3\rangle|
\le
\epsilon \big\|\vec u_3 \big\|_{H^{1+ c }}^2
+ \frac {C}{\epsilon}
  \|\vec u_1\|_{H^1}^2 \|\vec u_2\|^2_{L_3}
\end{equation}
for all vectors making finite each  r.h.s.
\end{lemma}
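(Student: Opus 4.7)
The plan is to combine a H\"older-type splitting of the trilinear integrand with the Sobolev embedding $H^1\hookrightarrow L_6$ in 3D, then use Young's inequality $ab\le\epsilon a^2+b^2/(4\epsilon)$ to distribute the three resulting factors across the two terms on the right-hand side. Throughout, the assumption $c\ge 0$ enters only in the trivial form $\|\vec u\|_{H^1}\le C\|\vec u\|_{H^{1+c}}$, so all three estimates ultimately reduce to ordinary $H^1$ trilinear bounds.

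For \eqref{st1} I would apply H\"older with exponents $(6,2,3)$,
\[
|\langle (\vec u_1\cdot\nabla)\vec u_2,\vec u_3\rangle|\le \|\vec u_1\|_{L_6}\,\|\nabla\vec u_2\|_{L_2}\,\|\vec u_3\|_{L_3},
\]
bound $\|\vec u_1\|_{L_6}\le C\|\vec u_1\|_{H^1}$ by the Sobolev embedding quoted in Section \ref{sec:ma} and $\|\nabla\vec u_2\|_{L_2}=\|\vec u_2\|_{H^1}\le C\|\vec u_2\|_{H^{1+c}}$, and then apply Young's inequality on the pair $\|\vec u_2\|_{H^{1+c}}\cdot(C\|\vec u_1\|_{H^1}\|\vec u_3\|_{L_3})$. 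For \eqref{st3} the same scheme works with H\"older exponents $(3,2,6)$,
\[
|\langle (\vec u_1\cdot\nabla)\vec u_2,\vec u_3\rangle|\le \|\vec u_1\|_{L_3}\,\|\nabla\vec u_2\|_{L_2}\,\|\vec u_3\|_{L_6},
\]
now using $H^{1+c}\hookrightarrow H^1\hookrightarrow L_6$ on $\vec u_3$ and $H^{1+c}\hookrightarrow H^1$ on $\vec u_2$, followed by Young applied to $\|\vec u_3\|_{H^{1+c}}\cdot(C\|\vec u_1\|_{L_3}\|\vec u_2\|_{H^{1+c}})$.

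The estimate \eqref{st2} is the subtler one because the right-hand side controls $\vec u_2$ only in $L_3$, with no derivative, so the gradient sitting on $\vec u_2$ in the trilinear form must be moved. My plan is to integrate by parts in space: on the periodic torus $D$,
\[
\langle (\vec u_1\cdot\nabla)\vec u_2,\vec u_3\rangle=-\langle (\vec u_1\cdot\nabla)\vec u_3,\vec u_2\rangle-\langle (\nabla\cdot\vec u_1)\vec u_2,\vec u_3\rangle,
\]
and then estimate each piece as in \eqref{st1}: the first by H\"older $(6,2,3)$ applied to $\vec u_1,\nabla\vec u_3,\vec u_2$, and the second by H\"older $(2,3,6)$ applied to $\nabla\cdot\vec u_1,\vec u_2,\vec u_3$. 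In both terms $\|\vec u_1\|_{L_6}$ and $\|\nabla\cdot\vec u_1\|_{L_2}$ are dominated by $\|\vec u_1\|_{H^1}$, while $\|\vec u_3\|_{L_6}\le C\|\vec u_3\|_{H^1}\le C\|\vec u_3\|_{H^{1+c}}$, so a final Young's inequality on $\|\vec u_3\|_{H^{1+c}}\cdot(C\|\vec u_1\|_{H^1}\|\vec u_2\|_{L_3})$ yields \eqref{st2}. The main obstacle, such as it is, lies in noticing that \eqref{st2} is not directly accessible from H\"older without this integration by parts, and in handling the stray $\nabla\cdot\vec u_1$ term that the IBP produces; in the intended applications $\vec u_1$ is the divergence-free velocity, so that term actually vanishes and the argument collapses to \eqref{st1} with $\vec u_2$ and $\vec u_3$ swapped.
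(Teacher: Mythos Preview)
Your proposal is correct and follows essentially the same route as the paper: H\"older with exponents $(6,2,3)$ for \eqref{st1}, $(3,2,6)$ for \eqref{st3}, the Sobolev embedding $H^1\subset L_6$, and Young/Cauchy to split. For \eqref{st2} the paper simply invokes the antisymmetry \eqref{trilin} (valid since all spaces $H^b$, $L_p$ in the paper consist of divergence-free vectors) and reduces to \eqref{st1} with $\vec u_2,\vec u_3$ swapped; your explicit integration by parts with the extra $\nabla\cdot\vec u_1$ term is the same manoeuvre written out, and that term vanishes in this setting.
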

\begin{proof} We begin with the first inequality:
\[\begin{split}
|\langle (\vec u_1 \cdot \nabla )\vec u_2, \vec u_3\rangle|
&\le \|\vec  u_1\|_{L_6} \|\nabla \vec u_2\|_{L_2} \|\vec u_3\|_{L_3}
\text{ by H\"older inequality}
\\
&\le C  \|\vec u_1\|_{H^1} \|\vec u_2 \|_{H^1} \|\vec u_3\|_{L_3}
\text{ by Sobolev embedding } H^1\subset L_6
\\&\le C_c \|\vec u_1 \|_{H^1}\|\vec u_2\|_{H^{1+c}}\|\vec u_3\|_{L_3}
\\&\le \epsilon \|\vec u_2\|_{H^{1+c}}^2+\frac {C_c^2}{4\epsilon}\|\vec u_1 \|^2_{H^1}\|\vec u_3\|_{L_3}^2
\text{ by Cauchy inequality}
\end{split}\]

For the second inequality, we proceed in a similar way:
\[
\begin{split}
|\langle (\vec u_1\cdot  \nabla) \vec u_2, \vec u_3 \rangle|
&\le
\|\vec  u_1\|_{L_3} 
\|\nabla \vec u_2\|_{L_2} 
\|\vec u_3\|_{L_6}
\\&\le
C \|\vec u_1\|_{L_3} \|\vec u_2 \|_{H^1} \|\vec u_3\|_{H^1}
\\&\le
C_ c  \|\vec u_1\|_{L_3} \|\vec u_2\|_{H^{1+c}} \|\vec u_3\|_{H^{1+ c }}
\end{split}\]
Then we apply Cauchy inequality to get the desired result.

For the third inequality, we have
\[
\langle (\vec u_1 \cdot \nabla )\vec u_2, \vec u_3\rangle
=
-\langle (\vec u_1 \cdot \nabla )\vec u_3, \vec u_2\rangle
\]
from \eqref{trilin}.
Then we get \eqref{st2} from \eqref{st1}.
\end{proof}

\begin{lemma}\label{st-difficile}
Let $ c  \ge \frac 14$.
Then there exists  a positive constant $C$ 
(depending on $c$) such that for any $\epsilon>0$ we have
\[
 |\langle (\vec u_1 \cdot \nabla) \vec u_2, \vec u_1 \rangle|
 \le 
 \epsilon \big\|\vec u_1\big\|^2_{H^{1+ c }  }
 +\frac C{\epsilon} 
  \big\|\vec u_2 \big\|_{H^{1+c}} ^2 \big\|\vec u_1\big\|_{L_2}^2
\]
for all vectors making finite the r.h.s..
\end{lemma}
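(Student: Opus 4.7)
The plan is to adapt the three-factor H\"older decomposition used in Lemma~\ref{lemmm}, but to redistribute the two copies of $\vec u_1$ differently: one factor is kept in $L_2$ so that, after a Young step, the low-regularity norm $\|\vec u_1\|_{L_2}^2$ appears with exactly the exponent demanded by the statement, while the second copy of $\vec u_1$ and the factor $\nabla \vec u_2$ absorb the $H^{1+c}$ regularity. Concretely, I write
\[
|\langle (\vec u_1 \cdot \nabla) \vec u_2, \vec u_1 \rangle|
\le \|\vec u_1\|_{L_2}\, \|\nabla \vec u_2\|_{L_q}\, \|\vec u_1\|_{L_r}
\]
via H\"older with $\frac{1}{2} + \frac{1}{q} + \frac{1}{r} = 1$.

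Next I use two Sobolev embeddings to reduce each of the last two factors to a single power of the $H^{1+c}$-norm: $H^c \subset L_q$ (for $c<\frac{3}{2}$) gives $\|\nabla \vec u_2\|_{L_q} \le C \|\vec u_2\|_{H^{1+c}}$ provided $\frac{1}{q} \ge \frac{3-2c}{6}$, while $H^{1+c}\subset L_r$ gives $\|\vec u_1\|_{L_r} \le C \|\vec u_1\|_{H^{1+c}}$ provided $\frac{1}{r} \ge \frac{1-2c}{6}$ (the latter being automatic for $c \ge \frac{1}{2}$). Together with the H\"older identity $\frac{1}{q}+\frac{1}{r}=\frac{1}{2}$, the two lower bounds are simultaneously satisfiable precisely when $\frac{3-2c}{6}+\frac{1-2c}{6}\le \frac{1}{2}$, i.e.\ $c\ge \frac{1}{4}$---exactly the hypothesis. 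A concrete choice is $q=\frac{6}{3-2c}$ and $r=\frac{3}{c}$, valid for $\frac{1}{4}\le c<\frac{3}{2}$; the regime $c\ge \frac{3}{2}$ reduces to the case $c=\frac{1}{4}$ via the monotonicity $\|\cdot\|_{H^{5/4}}\le C\|\cdot\|_{H^{1+c}}$.

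Combining the three estimates yields $|\langle (\vec u_1 \cdot \nabla) \vec u_2, \vec u_1 \rangle| \le C \|\vec u_1\|_{L_2} \|\vec u_2\|_{H^{1+c}} \|\vec u_1\|_{H^{1+c}}$, and a final application of Young's inequality $ab\le \epsilon a^2+\frac{1}{4\epsilon} b^2$ with $a=\|\vec u_1\|_{H^{1+c}}$ and $b=C\|\vec u_1\|_{L_2}\|\vec u_2\|_{H^{1+c}}$ gives the stated bound. The main delicate point is the choice of the H\"older split: unlike in the preceding lemma---where only one of the two outer factors had to absorb an $H^{1+c}$-norm---here both $\vec u_1$ and $\vec u_2$ must each supply one power of that norm, and the sharpness $c \ge \frac{1}{4}$ is precisely the threshold at which the three H\"older exponents admit a simultaneously feasible choice compatible with both Sobolev embeddings.
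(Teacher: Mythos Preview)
Your argument is correct and follows essentially the same route as the paper: a three-factor H\"older split keeping one copy of $\vec u_1$ in $L_2$, Sobolev embeddings $H^c\subset L_q$ and $H^{1+c}\subset L_r$ for the remaining two factors, and a final Cauchy/Young step. The only cosmetic difference is that the paper splits into the two ranges $\tfrac14\le c<\tfrac12$ (with $r=\tfrac{6}{1-2c}$, $q=\tfrac{6}{3-2c}$) and $c\ge\tfrac12$ (with fixed $r=6$, $q=3$), whereas you handle $\tfrac14\le c<\tfrac32$ in one stroke with the parametrized choice $q=\tfrac{6}{3-2c}$, $r=\tfrac{3}{c}$ and treat $c\ge\tfrac32$ by monotonicity; both lead to the identical product bound $C\,\|\vec u_1\|_{H^{1+c}}\|\vec u_2\|_{H^{1+c}}\|\vec u_1\|_{L_2}$.
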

\begin{proof} First we consider the range of values 
$\frac 14 \le  c <\frac 12$. We have
$\frac {1-2 c }6 +\frac {3-2 c }6 +\frac 12 \le 1$ and 
$H^{1+ c } \subset  L_{\frac 6{1-2 c }}$,
$ H^ c  \subset L_{\frac 6{3-2 c }}$. Thus, H\"older
and Sobolev inequalities give
\[
\begin{split}
|\langle  (\vec u_1 \cdot \nabla) \vec u_2, \vec u_1 \rangle|
&\le
\|\vec u_1\|_{L_{\frac 6{1-2 c }}}
\| \nabla \vec u_2\|_{L_{\frac 6{3-2 c }}}
\|\vec u_1\|_{L_2}
\\&\le
C_{ c } \|\vec u_1\|_{H^{1+ c }}
 \| \nabla \vec u_2\|_{H^ c }
\|\vec u_1\|_{L_2}
\\&\le 
C_{ c } \|\vec u_1\|_{H^{1+ c }}
\|\vec u_2\|_{H^{1+c}}
\|\vec u_1\|_{L_2}
\end{split}\]

Otherwise, for $c \ge \frac 12$,  we use the Sobolev embeddings  $H^{\frac 12}\subset L_3$ and
$H^1\subset L_6$. Therefore, again we estimate
\[
\begin{split}
|\langle (\vec u_1 \cdot \nabla) \vec u_2, \vec u_1\rangle|
&\le
\|\vec u_1\|_{L_6}
\|\nabla \vec u_2\|_{L_3}
\|\vec u_1\|_{L_2}
\\&\le C
\|\vec u_1\|_{H^1}
\|\nabla \vec u_2\|_{H^{\frac 12}}
\|\vec u_1\|_{L_2}
\\&\le
C_{c } 
\|\vec u_1\|_{H^{1+c}}\|\vec u_2\|_{H^{1+c}}
\|\vec u_1\|_{L_2}
\end{split}\]

Applying Cauchy inequality  we conclude the proof. \end{proof}

\section{The linear equation}\label{sec-ou}
When we neglect the non linearites in system \eqref{vort} for the
vorticity, we get
\begin{equation}\label{ou}
\begin{cases}
d\vec \zeta +(-\Delta)^{1+ c }\vec \zeta \ dt=(-\Delta)^{-b} d\vec w
\\
\nabla \cdot \vec \zeta=0
\end{cases}
\end{equation}
Here the second equation keeps track of the fact that the
vorticity vector is divergence free. So $\vec \zeta$  is the usual
Ornstein-Uhlenbeck process, well studied in the literature.
Here we assume 
$\vec \zeta(0)=\vec 0$. Therefore the mild solution of \eqref{ou} is 
\begin{equation}\label{zeta-mild}
 \vec \zeta(t)=\int_0^t e^{-(-\Delta)^{1+ c }(t-s)} (-\Delta)^{-b}d\vec w(s) 
\end{equation}
(see e.g. \cite{dpz}). We have 
\begin{prp}\label{pro-zeta}
Let 
\begin{equation}\label{ipo}
2b+ c >a+\frac 12
\end{equation}
Then,  for any $m\in \mathbb N$
\[ \vec \zeta\in C(\mathbb R_+;H_{2m}^a) \qquad \po-a.s
\] 
\end{prp}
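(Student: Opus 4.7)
Since $\vec\zeta$ is a Gaussian Ornstein-Uhlenbeck process, my plan is: expand it in Fourier modes, bound $\mathbb E\|\vec\zeta(t)\|_{H^a_{2m}}^{2p}$ and $\mathbb E\|\vec\zeta(t)-\vec\zeta(s)\|_{H^a_{2m}}^{2p}$ for $p$ large, and apply Kolmogorov's continuity criterion on each $[0,T]$. From \eqref{cilindrico} and \eqref{zeta-mild},
\[
\vec\zeta(t,\vec x)=\sum_{\vec k\in \mathbb Z^3_0}\sum_{j=1,2}\frac{\vec b_{\vec k,j}}{|\vec k|^{2b}}\,I_{\vec k,j}(t)\,e^{i\vec k\cdot \vec x},
\qquad
I_{\vec k,j}(t)=\int_0^t e^{-|\vec k|^{2(1+c)}(t-r)}\,d\beta_{\vec k,j}(r),
\]
and It\^o's isometry yields $\mathbb E|I_{\vec k,j}(t)|^{2}\le \tfrac{1}{2}|\vec k|^{-2(1+c)}$ uniformly in $t$.

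For fixed $t$, $(-\Delta)^{a/2}\vec\zeta(t,\cdot)$ is a centred Gaussian field, stationary in $\vec x$, so $\mathbb E|(-\Delta)^{a/2}\vec\zeta(t,\vec x)|^{2}\equiv\sigma_a^2(t)$ is independent of $\vec x$ and
\[
\sigma_a^2(t)\le C\sum_{\vec k\in \mathbb Z^3_0}\frac{|\vec k|^{2a}}{|\vec k|^{4b+2(1+c)}},
\]
which is finite exactly when $4b+2c+2-2a>3$, i.e., when \eqref{ipo} holds. The Gaussian moment inequality then gives $\mathbb E|(-\Delta)^{a/2}\vec\zeta(t,\vec x)|^{2m}\le C_m\sigma_a^2(t)^m$ uniformly in $\vec x$, and Fubini yields $\mathbb E\|\vec\zeta(t)\|_{H^a_{2m}}^{2m}<\infty$ for every $m\in\mathbb N$.

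For temporal regularity I split, for $0\le s\le t$,
\[
I_{\vec k,j}(t)-I_{\vec k,j}(s)=\int_s^t e^{-|\vec k|^{2(1+c)}(t-r)}\,d\beta_{\vec k,j}(r)+\bigl(e^{-|\vec k|^{2(1+c)}(t-s)}-1\bigr)I_{\vec k,j}(s),
\]
and, using $1-e^{-x}\le x^\alpha$ for any $\alpha\in[0,1]$ and $x\ge 0$, obtain the interpolation bound
\[
\mathbb E|I_{\vec k,j}(t)-I_{\vec k,j}(s)|^{2}\le C\,|t-s|^{\alpha}\,|\vec k|^{-2(1+c)(1-\alpha)}.
\]
Reapplying spatial stationarity and Gaussianity to the increment $\vec\zeta(t)-\vec\zeta(s)$,
\[
\mathbb E\|\vec\zeta(t)-\vec\zeta(s)\|_{H^a_{2m}}^{2p}\le C_{m,p}\,|t-s|^{\alpha p}\,\Bigl(\sum_{\vec k\in \mathbb Z^3_0}|\vec k|^{2a-4b-2(1+c)(1-\alpha)}\Bigr)^{p}.
\]
Because \eqref{ipo} is strict, I can pick $\alpha\in(0,1)$ small so that $4b+2(1+c)(1-\alpha)-2a>3$ and the sum converges; choosing $p$ with $\alpha p>1$, Kolmogorov's criterion delivers $\vec\zeta\in C([0,T];H^a_{2m})$ a.s. for every $T>0$, hence $\vec\zeta\in C(\mathbb R_+;H^a_{2m})$ a.s.

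The step I expect to require the most care is the interpolation bound on $\mathbb E|I_{\vec k,j}(t)-I_{\vec k,j}(s)|^{2}$: the ``new noise'' integral over $[s,t]$ contributes of order $|t-s|$ with no decay in $|\vec k|$, while the ``old part'' weighted by $(e^{-|\vec k|^{2(1+c)}(t-s)}-1)$ saturates at order one for high modes. These two contributions must be balanced so that the power gained in $|t-s|$ leaves exactly the spectral margin afforded by the strict inequality in \eqref{ipo} to close the Fourier sum.
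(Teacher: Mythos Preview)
Your argument is correct and complete: the Fourier expansion, the Gaussian moment bound at fixed time, the split of $I_{\vec k,j}(t)-I_{\vec k,j}(s)$ into the fresh stochastic integral over $[s,t]$ and the semigroup-damped past, the interpolation $1-e^{-x}\le x^\alpha$, and the final appeal to Kolmogorov's criterion (with $p\ge m$ so that $\|\cdot\|_{H^a_{2m}}^{2p}$ is controlled by $\int_D|(-\Delta)^{a/2}(\cdot)|^{2p}d\vec x$ and then by $(\sigma^2)^p$ via Gaussian moments) all go through. The spectral summability condition you isolate, $4b+2(1+c)(1-\alpha)-2a>3$, reduces to \eqref{ipo} as $\alpha\downarrow 0$, so the strict inequality is exactly what lets you find $\alpha>0$ with $\alpha p>1$.

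The paper takes a different route: instead of Kolmogorov, it uses the \emph{factorization method} of Da Prato--Kwapie\'n--Zabczyk. One writes $\vec\zeta(t)=\tfrac{\sin(\pi\alpha)}{\pi}\int_0^t (t-s)^{\alpha-1}e^{-(-\Delta)^{1+c}(t-s)}\vec Y_\alpha(s)\,ds$ with $\vec Y_\alpha(s)=\int_0^s (s-r)^{-\alpha}e^{-(-\Delta)^{1+c}(s-r)}(-\Delta)^{-b}d\vec w(r)$, proves $\mathbb E\|\vec Y_\alpha\|_{L^{2m}(0,T;H^a_{2m})}^{2m}<\infty$ under the same spectral condition $2a-4b+2(1+c)(2\alpha-1)<-3$, and then invokes the deterministic smoothing of the convolution with the semigroup to conclude $\vec\zeta\in C([0,T];H^a_{2m})$. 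Your approach is more elementary and self-contained for the present Gaussian setting, exploiting directly that pointwise-in-$\vec x$ moments are computable; the factorization method, by contrast, does not rely on Gaussianity of the driving noise and is the standard tool when one later wants the same regularity for stochastic convolutions driven by more general martingales. Both methods extract continuity from the same margin in \eqref{ipo}, trading a small power of $|\vec k|^{2(1+c)}$ for either a H\"older exponent in $|t-s|$ (your $\alpha$) or an integrability exponent in the factorization (the paper's $\alpha$).
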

\begin{proof}
The proof is basically the same as that in \cite{dp}
proving that $\vec \zeta $ has $\po$-a.e. path 
in $C(\mathbb R_+;H^a)$. Working on the torus, we can
improve that result  getting $\vec \zeta \in C(\mathbb R_+;H^a_{2m})$. 

The factorization method uses that 
\begin{equation}\label{eq-z-y}
\vec \zeta(t)=\frac{\sin(\pi\alpha)}{\pi}
\int_0^t \frac1{(t-s)^{1-\alpha}}e^{-(-\Delta)^{1+ c }(t-s)} \vec Y_\alpha(s)ds
\end{equation}
for $0<\alpha<1$, with 
\[
\vec Y_\alpha(s)=\int_0^s \frac{1}{(s-r)^\alpha} e^{-(-\Delta)^{1+ c }(s-r)} 
(-\Delta)^{-b}d\vec w(r)
\]
Now we prove that
under assumption \eqref{ipo} there exists $\alpha\in (0,\frac 12)$
such that
\begin{equation}\label{sultoro}
\mathbb E\|\vec Y_\alpha\|^{2m}_{L^{2m}(0,T;H^a_{2m})}<\infty
\end{equation}
for any $m \in \mathbb N$.

For fixed $\vec x$ and $t$, 
$[(-\Delta)^{a/2}\vec Y_\alpha](t,\vec x)$ 
is a Gaussian random variable given by the
sum of independent Gaussian random variables
\[
(-\Delta)^{a/2}\vec Y_\alpha(t,\vec x)=\sum_{\vec k\in \mathbb Z^3_0} 
 |\vec k|^{a}  \sum_{j=1}^2
\int_0^t \frac 1{(t-s)^\alpha}e^{-|\vec k|^{2(1+ c )}(t-s)} |\vec k|^{-2b} 
      \vec b_{\vec k,j}d\beta_{\vec k,j}(s) e^{i \vec k \cdot \vec x}
\]
Therefore the variance of 
$(-\Delta)^{a/2}\vec Y_\alpha(t,\vec x)$ is the sum of the
variance of each addend:
\[\begin{split}
\mathbb E |(-\Delta)^{a/2}\vec Y_\alpha(t,\vec x)|^2
&=\sum_{\vec k\in \mathbb Z^3_0} |\vec k|^{2a-4b}
\int_0^t \frac 1{(t-s)^{2\alpha}}
e^{-2|\vec k|^{2(1+ c )}(t-s)} ds
\\&=
\sum_{\vec k\in \mathbb Z^3_0} |\vec k|^{2a-4b}
\int_0^t \frac 1{r^{2\alpha}}
e^{-2|\vec k|^{2(1+ c )}r} dr
\\&=\sum_{\vec k\in \mathbb Z^3_0}  |\vec k|^{2a-4b}
|\vec k|^{2(1+ c )(2\alpha-1)} \int_0^{t|\vec k|^{2(1+ c )}}\frac 1{u^{2\alpha}}e^{-2u}du
\\&\le
\sum_{\vec k\in \mathbb Z^3_0}  |\vec k|^{2a-4b}
|\vec k|^{2(1+ c )(2\alpha-1)} \int_0^\infty\frac 1{u^{2\alpha}}e^{-2u}du
\\&= C_\alpha
\sum_{\vec k\in \mathbb Z^3_0} |\vec k|^{2a-4b+2(1+ c )(2\alpha-1)} 
\end{split}
\]
where the constant $C_\alpha$ is finite for any $\alpha<\frac 12$.

Since $(-\Delta)^{a/2}\vec Y_\alpha(t,\vec x)$ 
is a centered Gaussian  random variable, for any integer $m$ we have
\[
\mathbb E |(-\Delta)^{a/2}\vec Y_\alpha(t,\vec x)|^{2m}=
C_m \left(\mathbb E |(-\Delta)^{a/2}\vec Y_\alpha(t,\vec x)|^2\right)^m
\le
C_{m,\alpha}
\left(\sum_{\vec k\in \mathbb Z^3_0} |\vec k|^{2a-4b+2(1+ c )(2\alpha-1)} \right)^m
\]
Integrating with respect to the variables $t\in [0,T]$ and $\vec x\in D$ we get
\[
\mathbb E\|\vec Y_\alpha\|_{L^{2m}(0,T;H^a_{2m})}^{2m}\le C_{m,\alpha}
T (2\pi)^3
\left(\sum_{\vec k\in \mathbb Z^3_0} |\vec k|^{2a-4b+2(1+ c )(2\alpha-1)} \right)^m
\]
The series in the r.h.s. converges if and only if 
\[
2a-4b+2(1+ c )(2\alpha-1)<-3
\]
i.e.
\begin{equation}\label{ipo2}
2b+ c >a+\frac 12+2\alpha(1+ c )
\end{equation}

If \eqref{ipo} holds then there exists $\alpha>0$ small
enough to get \eqref{ipo2} 
and thus for such an $\alpha$ we have proved \eqref{sultoro}.

Now, given \eqref{sultoro},  with a trivial modification  of
the proof of Lemma 2.7 in \cite{dp}, from \eqref{eq-z-y} we get 
\[
\mathbb E \sup_{0\le t\le T} \|\vec \zeta(t)\|_{H^a_{2m}}^{2m}\le 
C_{m,T} \mathbb \|\vec Y_\alpha\|^{2m}_{L^{2m}(0,T;L_{2m})}
\]
and the continuity result.
\end{proof}

\section{The vorticity transport equation}\label{sec-eta}
As explained before, we consider the system obtained from \eqref{vort} by neglecting the term 
$P[(\vec \xi \cdot \nabla) \vec v]$ in the first equation.
This is
\begin{equation}\label{burg}
\begin{cases}
 d\vec \eta+ (-\Delta)^{1+ c }\vec \eta \ dt 
+ P[(\vec v\cdot \nabla)\vec \eta] \ dt 
 =(-\Delta)^{-b}d\vec w
\\
\nabla \cdot \vec v =0
\\
\vec \eta=\nabla \times \vec v
\end{cases}
\end{equation}
We call it the vorticity transport system, since its first equation
 is a reduced form 
of the vorticity equation in \eqref{vort}: in \eqref{burg} vorticity
is only transported, not stretched.

Let us point out a feature of the equation of $\vec \eta$.
The nonlinearity $(\vec v\cdot \nabla)\vec \eta$ has a peculiar form
similar to that appearing 
in the regularized form of Leray-$\alpha$ models for fluids (see e.g. \cite{BBF}), that is the first
entry of the bilinear term $P[(\vec v\cdot\nabla)\vec \eta]$ 
is not the unknown
$\vec \eta$ itself but indeed $\vec v$, which has one order more of regularity
with respect to $\vec \eta$ (recall that if $\vec \eta \in H^b_p$ then
$\vec v \in H^{b+1}_p$). Therefore,
even if $\vec \eta$ satisfies a nonlinear equation, 
the quadratic term $(\vec v\cdot \nabla)\vec
\eta$ in \eqref{burg} 
(with $\vec \eta=\nabla \times \vec v$)
behaves better than 
$(\vec v\cdot \nabla)\vec v$ in \eqref{vel}
and this makes the difference in the
analysis of systems \eqref{burg} and \eqref{vel}.

As far as the technique is concerned, we point out that
in order to get existence and uniqueness results, we could look for
mean estimates.
However, for our purpose it is enough to get pathwise estimates (see
Theorem \ref{gener}).
Moreover, the advantage of the pathwise approach is twofold: 
the existence result is obtained asking weaker assumption on the
covariance of the noise and the regularity results are easily
obtained.
To see the first advantage,  thanks to \eqref{trilin}, 
with the usual techniques  (see e.g. \cite{BT}, \cite{cime})
we can get
\[
\mathbb E\left[ \| \vec \eta (t)\|_{L_2} ^{2}+2
\int_{0}^{t}\| \vec \eta (s)\|_{H^{1+ c }}^{2}ds\right]
\le 
\|\vec \eta (0)\|_{L_2}^{2}+Tr\left( (-\Delta)^{-2b}\right)
\ t
\]
This requires $Tr\left( (-\Delta)^{-2b}\right)<\infty$, i.e.
\[
\sum_{\vec k \in \mathbb Z^3_0}|\vec k|^{-4b}<\infty
\]
which holds when $b>\frac 34$.
But Theorem \ref{eta-completo} allows to get existence of a basic
solution $\vec \eta$ for $b>\frac 14-\frac c2$.
Since our task in Theorem \ref{gener} will be to estimate
\[
\|(-\Delta)^b P[(\vec \eta\cdot \nabla )\vec v]\|_{L_2}
\]
it is clear than the smaller is $b$
the easier is our task.

For this aim, we set
$\vec \beta=\vec \eta-\vec \zeta$ and exploit that the noise is
independent of the unknowns; then
\begin{equation}\label{eq-beta}
\begin{cases}
\dfrac{\partial\vec \beta}{\partial t}  + (-\Delta)^{1+ c }\vec\beta
+P[(\vec  v\cdot \nabla)(\vec  \beta+\vec \zeta)]=\vec 0
\\
\nabla \cdot \vec v=0
\\
\nabla \times \vec v=\vec \beta +\vec \zeta
\end{cases}
\end{equation}

System \eqref{eq-beta} is studied pathwise. We have the following result
\begin{prp}\label{pro-beta}
i) Assume that
\[
\begin{cases}
 c  \ge 0\\
2b+ c >\frac 12
\end{cases}
\]
Then, for any $\vec \beta(0)\in L_2$
there exists a solution to \eqref{eq-beta} such
that 
$$\vec \beta \in  
C([0,T];L_2) \cap  L^2(0,T;H^{1+ c })
\qquad\po-a.s.$$
ii)
Assume that
\[
\begin{cases}
 c  \ge 0\\
2b+ c >\frac 32
\end{cases}
\]
Then, for any  $\vec \beta(0)\in H^1$
the solution  given in i)  enjoys also
$$\vec \beta \in  
C([0,T];H^1) \cap  L^2(0,T;H^{2+ c })
\qquad\po-a.s.$$
iii) 
Assume that
\[
\begin{cases}
 c  \ge 0\\
2b+ c >\frac 52
\end{cases}
\]
Then, for any $\vec \beta(0)\in H^2$
the solution given in i) enjoys also
$$\vec \beta \in  
C([0,T];H^2) \cap  L^2(0,T;H^{3+ c })
\qquad\po-a.s.$$
\end{prp}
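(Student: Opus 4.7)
The plan is pathwise. I would fix $\omega$ in a set of full $\po$-measure where Proposition \ref{pro-zeta} supplies $\vec \zeta$ with the required space-time regularity for each part, and then view \eqref{eq-beta} as a random PDE with $\vec \zeta(\omega,\cdot)$ playing the role of a datum. Existence is produced via Galerkin approximation on the finite-dimensional spaces $H_n = \Pi_n L_2$: each projected equation is a locally Lipschitz ODE and has a local-in-time solution $\vec \beta_n$, which the a priori estimates below extend to $[0,T]$. Convergence then follows by Aubin--Lions compactness, and uniqueness / continuity in time follows by repeating the same estimate on the difference of two solutions.

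For part (i), I would test the $n$-th Galerkin equation against $\vec \beta_n$. Since $\vec v_n$ is divergence free, \eqref{trilin-0} cancels $\langle P[(\vec v_n\cdot\nabla)\vec\beta_n],\vec\beta_n\rangle$, and after an integration by parts using \eqref{trilin} only $\langle(\vec v_n\cdot\nabla)\vec\beta_n,\vec\zeta\rangle$ remains on the right. Applying \eqref{st1} of Lemma \ref{lemmm} with $\vec u_1=\vec v_n$, $\vec u_2=\vec\beta_n$, $\vec u_3=\vec\zeta$, together with the Biot--Savart bound $\|\vec v_n\|_{H^1}\lesssim\|\vec\beta_n\|_{L_2}+\|\vec\zeta\|_{L_2}$, yields
\[
\tfrac12\tfrac{d}{dt}\|\vec\beta_n\|_{L_2}^2 + \|\vec\beta_n\|_{H^{1+c}}^2 \le \varepsilon\|\vec\beta_n\|_{H^{1+c}}^2 + \tfrac{C}{\varepsilon}\bigl(\|\vec\beta_n\|_{L_2}^2+\|\vec\zeta\|_{L_2}^2\bigr)\|\vec\zeta\|_{L_3}^2.
\]
Absorbing the first term on the right and applying Gronwall closes the estimate as soon as $\vec\zeta$ is pathwise bounded in $L^\infty(0,T;L_2\cap L_3)$. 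This holds by Proposition \ref{pro-zeta} with $a=0$, $m=2$ (so $\vec\zeta\in C(\mathbb R_+;L_4)$, which embeds into $L^\infty_tL_3$ on the bounded torus), whose hypothesis is precisely $2b+c>1/2$.

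For parts (ii) and (iii), I would continue with higher-order energy estimates by testing the Galerkin equation against $(-\Delta)\vec\beta_n$ and $(-\Delta)^2\vec\beta_n$. The nonlinear contributions split as $\langle(\vec v_n\cdot\nabla)\vec\beta_n,(-\Delta)^k\vec\beta_n\rangle$ and $\langle(\vec v_n\cdot\nabla)\vec\zeta,(-\Delta)^k\vec\beta_n\rangle$ for $k=1,2$. After integration by parts these become sums of trilinear integrals of total derivative order $2k+1$, which I would bound by H\"older and the Sobolev embeddings underlying Lemma \ref{lemmm}, allocating derivatives so that $\vec\beta_n$ sits at most at level $H^{k+1+c}$ (absorbable into the dissipation with a factor $\varepsilon$) while the Sobolev regularity demanded of $\vec\zeta$ is some $a<2b+c-\tfrac12$. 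The hypotheses $2b+c>\tfrac32$ and $2b+c>\tfrac52$ are precisely what Proposition \ref{pro-zeta} needs to give $\vec\zeta$ at Sobolev level $a\approx1$ and $a\approx2$ in arbitrarily high $L_{2m}$ integrability, and Gronwall again closes the estimate. Uniqueness in the stronger norm follows by the argument of part (i) applied to the difference.

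The main obstacle lies in the bookkeeping for (ii) and (iii). Because we work in 3D, the cubic term $\langle(\vec v_n\cdot\nabla)\vec\beta_n,(-\Delta)^k\vec\beta_n\rangle$ does \emph{not} vanish as in the $L_2$ case, and one must carefully distribute the $2k+1$ derivatives among the three factors so that neither $\vec\beta_n$ exceeds $H^{k+1+c}$ nor $\vec\zeta$ exceeds the regularity available pathwise. Once the correct allocation of derivatives is found, the remaining steps (Gronwall, Aubin--Lions, uniqueness, continuity in time) are routine.
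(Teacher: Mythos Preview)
Your plan for part (i) matches the paper's proof essentially line for line: test against $\vec\beta$, use \eqref{trilin-0} to kill the self-interaction, integrate by parts, and invoke \eqref{st1} with the Biot--Savart bound $\|\vec v\|_{H^1}\le C\|\vec\beta+\vec\zeta\|_{L_2}$; the regularity $\vec\zeta\in C([0,T];L_3)$ supplied by Proposition~\ref{pro-zeta} under $2b+c>\tfrac12$ is exactly what closes Gronwall.

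For parts (ii) and (iii), however, the paper takes a simpler route than the one you sketch. Rather than splitting the nonlinearity into a $\vec\beta$-piece and a $\vec\zeta$-piece, integrating by parts, and then distributing the $2k+1$ derivatives, the paper keeps $\vec\beta+\vec\zeta$ together and estimates the trilinear term \emph{directly}, with no integration by parts at all. The key observation that makes this work is the one-derivative gain in Biot--Savart: since $\nabla\times\vec v=\vec\beta+\vec\zeta$, one has $\|\vec v\|_{H^{k+1}}\le C\|\vec\beta+\vec\zeta\|_{H^k}$. For (ii) this gives, via $H^2\subset L_\infty$,
\[
|\langle(\vec v\cdot\nabla)(\vec\beta+\vec\zeta),\Delta\vec\beta\rangle|
\le \|\vec v\|_{L_\infty}\|\vec\beta+\vec\zeta\|_{H^1}\|\vec\beta\|_{H^2}
\le C\|\vec\beta+\vec\zeta\|_{H^1}^2\|\vec\beta\|_{H^{2+c}},
\]
and for (iii) a product estimate in $H^1$ yields $\|(\vec v\cdot\nabla)(\vec\beta+\vec\zeta)\|_{H^1}\le C\|\vec\beta+\vec\zeta\|_{H^1}\|\vec\beta+\vec\zeta\|_{H^2}$. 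In both cases the resulting inequality is of the form $\tfrac{d}{dt}\|\vec\beta\|_{H^k}^2+\|\vec\beta\|_{H^{k+1+c}}^2\le C\|\vec\beta\|_{H^k}^4+C\|\vec\zeta\|_{H^k}^4$, and Gronwall closes using $\vec\beta\in L^2(0,T;H^k)$ from the previous step together with $\vec\zeta\in C([0,T];H^k)$ from Proposition~\ref{pro-zeta}. So the ``main obstacle'' you flag---the careful allocation of derivatives in the cubic term---is bypassed entirely by exploiting the smoothing in $\vec v$; your approach would work too, but involves bookkeeping the paper avoids.
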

\begin{proof} 
We proceed pathwise.
The technique to prove existence is to consider first the 
finite dimensional problem, obtained by applying the projection operator
$\Pi_n$ to \eqref{eq-beta}. The goal is to find suitable a priori
estimates, uniformly in $n$.
Thus, when any finite dimensional (Galerkin) problem has a solution
we pass to the limit as $n \to \infty$ to get an existence result 
for \eqref{eq-beta}.
This technique, based on finite dimensional approximation, is well
known (see e.g. \cite{temam, temam2}).
Therefore we look for a priori estimates for the full system
\eqref{eq-beta}; they hold for any Galerkin approximation as well, but
we  skip the details for the limit as $n \to \infty$.

i) We multiply the l.h.s. of the first equation of \eqref{eq-beta}
by $\vec\beta(t)$ and integrate over $D$.
Using \eqref{trilin}-\eqref{trilin-0} and then H\"older and Sobolev inequalities, we get
\[\begin{split}
\frac 12 \frac{d}{dt}\|\vec \beta(t)\|_{L_2}^2
+ \|\vec \beta(t)\|_{H^{1+c}}^2
&=
-\langle P[(\vec v(t)\cdot \nabla)\vec \zeta(t)],\vec \beta(t)\rangle
\\&
=\langle (\vec v(t)\cdot \nabla)\vec \beta(t),\vec \zeta(t)\rangle
\\&
\le C \|\vec v(t)\|_{L_6} \|\vec \beta(t)\|_{H^1}\|\vec \zeta(t)\|_{L_3}
\\&
\le C_c \|\vec v(t)\|_{H^1} \|\vec \beta(t)\|_{H^{1+c}}\|\vec \zeta(t)\|_{L_3}
\\&
\le C \|\vec \beta(t)+\vec \zeta(t)\|_{L_2} \|\vec\beta(t)\|_{H^{1+c}}
    \|\vec \zeta(t)\|_{L_3}
\end{split}
\]
Cauchy inequality gives
\begin{equation}\label{1stimab}
\frac 12 \frac{d}{dt}\|\vec \beta(t)\|_{L_2}^2
+ \|\vec \beta(t)\|_{H^{1+c}}^2
\le
\frac 12 \|\vec\beta(t)\|_{H^{1+c}}^2
+C \|\vec \zeta(t)\|_{L_3}^2   \|\vec \beta(t)\|_{L_2}^2  
+C \|\vec \zeta(t)\|_{L_3}^4
\end{equation}
Therefore, Gronwall inequality applied to 
\[
\frac{d}{dt}\|\vec \beta(t)\|_{L_2}^2
\le
C \|\vec \zeta(t)\|_{L_3}^2   \|\vec \beta(t)\|_{L_2}^2  
+C \|\vec \zeta(t)\|_{L_3}^4
\]
gives
\[
\sup_{0\le t\le T}\|\vec \beta(t)\|_{L_2}^2
\le C(b,c ,T,\|\vec \beta(0)\|_{L_2},\|\vec \zeta\|_{L^\infty(0,T;L_3 )})
\]
Integrating in time \eqref{1stimab} we get 
\[
  \int_0^T\|\vec \beta(t)\|_{H^{1+ c }}^2 dt \le 
\tilde C(b, c ,T,\|\vec \beta(0)\|_{L_2},\|\vec \zeta\|_{L^\infty(0,T;L_3)} )
\]

We remind that $\vec \zeta \in C([0,T];L_3)$ if $2b+c>\frac 12$,
according to Proposition \ref{pro-zeta}. 
Then these a priori estimates give 
$\vec \beta \in L^\infty(0,T;L_2) \cap L^2(0,T;H^{1+ c })$.

Moreover,
\[
\frac{\partial \vec \beta}{\partial t}=- (-\Delta)^{1+ c }\vec \beta
-P[(\vec  v\cdot \nabla) \vec \beta]
-P[(\vec  v\cdot \nabla) \vec \zeta]
\]
Given the regularity of $\vec \beta$ we have that the r.h.s. belongs to 
$L^2(0,T;H^{-1- c })$; indeed $(-\Delta)^{1+ c }\vec \beta
\in L^2(0,T;H^{-1- c })$ and the two latter terms belong to 
$L^2(0,T;H^{-1})$. Let us see this; we proceed as before
\[
|\langle(\vec  v\cdot \nabla) \vec \beta, \vec u\rangle|=
|\langle (\vec  v\cdot \nabla) \vec u, \vec \beta\rangle|
\le
\|\vec v\|_{L_6}\|\nabla \vec u\|_{L_2}\|\vec \beta\|_{L_3}
\]
This gives 
\[\begin{split}
\|(\vec  v\cdot \nabla) \vec \beta\|_{H^{-1}}=
\sup_{\|\vec u\|_{H^1}> 0}
\frac {|\langle(\vec  v\cdot \nabla) \vec \beta, 
\vec u\rangle|}{\|\vec u\|_{H^1}}
& \le \|\vec v\|_{L_6}\|\vec \beta\|_{L_3}
\\
&\le C \|\vec v\|_{H^1} \|\vec \beta\|_{H^1}
\\
&\le C (\|\vec \beta\|_{L_2}+\|\vec \zeta\|_{L_2}) \|\vec \beta\|_{H^1}
\end{split}\]
Similarly we deal with $(\vec  v\cdot \nabla) \vec \zeta$:
\[
\|(\vec  v\cdot \nabla) \vec \zeta\|_{H^{-1}}\le
\|\vec v\|_{L_6}\|\vec \zeta\|_{L_3}
\le
C \|\vec \zeta\|_{L_3}^2+\|\vec \zeta\|_{L_3}\|\vec \beta\|_{L_2}
\]

We recall 
that the space $\{\vec\beta \in L^2(0,T;H^{1+ c }): 
\frac{\partial \vec \beta}{\partial t}\in L^2(0,T;H^{-1- c })\}$
is compactly embedded in $L^2(0,T;L_2)$.

These are the basic results to implement the Galerkin approximation.

As far as
the continuity is concerned,
the fact that $\vec \beta \in L^2(0,T;H^{1+ c })$ and 
$\frac{\partial \vec \beta}{\partial t}\in L^2(0,T;H^{-1- c })$ implies 
$\vec \beta \in C([0,T];L_2)$ (see Ch III Lemma 1.2 of \cite{temam}).

ii) We need a priori estimates and we proceed as in the previous step.
We multiply the l.h.s. of the first equation of \eqref{eq-beta}
by $-\Delta \vec\beta(t)$ and integrate on $D$. We get
\[
\frac 12 \frac{d}{dt}\|\vec \beta(t)\|_{H^1}^2
+ \|\vec \beta(t)\|_{H^{2+c}}^2
=\langle (\vec  v(t)\cdot \nabla)(\vec  \beta(t)
+ \vec \zeta(t)),\Delta \vec\beta(t)\rangle
\]
We estimate the r.h.s. as follows
\[\begin{split}
\langle (\vec  v\cdot \nabla)(\vec  \beta+ \vec \zeta),\Delta \vec\beta\rangle
&\le
\|(\vec v\cdot \nabla)(\vec\beta+\vec \zeta)\|_{L_2}\|\Delta \vec\beta\|_{L_2}
\\&
\le \|\vec v\|_{L_\infty}\|\vec\beta+\vec \zeta\|_{H^1}\|\vec\beta\|_{H^2}
\\&
\le C \|\vec v\|_{H^2}\|\vec\beta+\vec\zeta\|_{H^1}\|\vec\beta\|_{H^2}
\;\text{ since } H^2\subset L_\infty
\\&
\le C_c \|\vec \beta+\vec \zeta\|_{H^1}^2\|\vec\beta\|_{H^{2+c}}
\\&
\le \frac 12 \|\vec\beta\|_{H^{2+c}}^2+C \|\vec \beta\|_{H^1}^4+\|\vec \zeta\|_{H^1}^4
\end{split}
\]
This gives
\[
\frac{d}{dt}\|\vec \beta(t)\|_{H^1}^2
+ \|\vec \beta(t)\|_{H^{2+c}}^2\le
C \|\vec \beta\|_{H^1}^4+\|\vec \zeta\|_{H^1}^4
\]
and we conclude as before using Gronwall Lemma and the fact that
$\vec \beta \in L^2(0,T;H^1)$ from i) and $ \vec \zeta \in
C([0,T];H^1)$ from Proposition \ref{pro-zeta}, getting
\[
\sup_{0\le t\le T}\|\vec \beta(t)\|_{H^1}^2
\le C(b,c ,T,\|\vec \beta(0)\|_{H^1},\|\vec \zeta\|_{L^\infty(0,T;H^1)})
\]
\[
\int_0^T\|\vec \beta(t)\|_{H^{2+ c }}^2 dt \le 
\tilde C(b, c ,T,\|\vec \beta(0)\|_{H^1},\|\vec \zeta\|_{L^\infty(0,T;H^1)} )
\]
Continuity in time is obtained as before.

iii) We multiply the l.h.s. of the first equation of \eqref{eq-beta}
by $(-\Delta)^2 \vec\beta(t)$ and integrate on $D$. We get
\[
\frac 12 \frac{d}{dt}\|\vec \beta(t)\|_{H^2}^2
+ \|\vec \beta(t)\|_{H^{3+c}}^2
=-\langle (\vec  v(t)\cdot \nabla)(\vec  \beta(t)
+ \vec \zeta(t)),(-\Delta)^2 \vec\beta(t)\rangle
\]
We estimate the r.h.s. as follows. First, we use the estimate for the product; by means of 
the Sobolev embedding $H^2\subset L_\infty$ we get
\[\begin{split}
\|fg\|_{H^1}
\le \|g\nabla f\|_{L_2}+\|f\nabla g\|_{L_2}
&\le \|\nabla f\|_{L_\infty} \|g\|_{L_2}+\|f\|_{L_\infty}\|\nabla g\|_{L_2}
\\&\le C \|f\|_{H^3} \|g\|_{L_2}+C\|f\|_{H^2}\|g\|_{H^1}
\end{split}\]
Hence, for the trilinear term we get
\[\begin{split}
\langle (\vec v\cdot\nabla)(\vec\beta+ \vec\zeta),(-\Delta)^2 \vec\beta \rangle
&=\langle (-\Delta)^{\frac 12}[(\vec v\cdot\nabla)(\vec\beta+\vec\zeta)],
(-\Delta)^{\frac 32} \vec\beta \rangle
\\&
\le \|(\vec v\cdot\nabla)(\vec\beta+\vec\zeta)\|_{H^1}\|\vec\beta\|_{H^3}
\\&
\le C \Big(\|\vec v\|_{H^3} \|\vec\beta+\vec\zeta\|_{H^1}
+\|\vec v\|_{H^2}\|\vec\beta+\vec\zeta\|_{H^2}\Big)
\|\vec\beta\|_{H^3}
\\&
\le C \|\vec\beta+\vec\zeta\|_{H^1} \|\vec\beta+\vec\zeta\|_{H^2}
\|\vec\beta\|_{H^3}
\\&
\le C_c \|\vec\beta+\vec\zeta\|_{H^1} \|\vec\beta+\vec\zeta\|_{H^2}
\|\vec\beta\|_{H^{3+c}}
\\
&\le \frac 12 \|\vec\beta\|_{H^{3+c}}^2
+C  \|\vec\beta+\vec\zeta\|^2_{H^1} \|\vec \beta\|_{H^2}^2
+C \|\vec\beta+\vec\zeta\|^2_{H^1} \|\vec \zeta\|_{H^2}^2
\end{split}\]
This gives
\[
\frac{d}{dt}\|\vec \beta(t)\|_{H^2}^2
+ \|\vec \beta(t)\|_{H^{3+c}}^2
\le
C \|\vec\beta(t)+\vec\zeta(t)\|^2_{H^1} \|\vec \beta(t)\|_{H^2}^2
+C \|\vec \zeta(t)\|_{H^2}^2 \|\vec \beta(t)\|_{H^1}^2+C\|\vec \zeta(t)\|_{H^2}^4
\]
Since  $\vec \beta\in C([0,T];H^1)$ from step ii) 
and $\vec \zeta \in C([0,T];H^2)$ from Proposition \ref{pro-zeta}, 
we get first
\[
\sup_{0\le t\le T}\|\vec \beta(t)\|_{H^2}^2
\le C(b,c ,T,\|\vec \beta(0)\|_{H^2},\|\vec \zeta\|_{L^\infty(0,T;H^2)})
\]
and then
\[
\int_0^T\|\vec \beta(t)\|_{H^{3+ c }}^2 dt \le 
\tilde C(b, c ,T,\|\vec \beta(0)\|_{H^2},\|\vec \zeta\|_{L^\infty(0,T;H^2)} )
\]
Continuity in time is obtained as before.
This concludes the proof.
\end{proof}

Now we come back to the unknown $\vec \eta=\vec \beta+\vec \zeta$.
The definition of basic solution is the same as that for $\vec \xi $ given at
the end of Section \ref{sec:ma}, with the obvious modification of the
equation by  neglecting $P[(\vec \xi\cdot \nabla)\vec v]$.
\begin{thm}\label{eta-completo}
i) Assume that
\[
\begin{cases}
 c  \ge 0\\
2b+ c >\frac 12
\end{cases}
\]
Then, for any $\vec\eta(0)\in L_2$ there exists a unique process 
$\vec \eta$ which is a basic solution to \eqref{burg} such that
\[
 \vec \eta \in 
C([0,T];L_2)  \cap L^2(0,T;L_6)
\]
$\po$-a.s.
\\
Moreover there is continuous dependence on the initial data: given two
initial data $\vec \eta(0),\vec \eta_\star(0)\in L_2$ we have
\[
 \|\vec \eta(0)-\vec \eta_\star(0)\|_{L_2} \to 0 \Longrightarrow 
 \|\vec\eta-\vec \eta_\star\|_{C([0,T];L_2)} \to 0 
\]
ii)
Assume that
\[
\begin{cases}
 c  \ge 0\\
2b+ c >\frac 32
\end{cases}
\]
Then, for any  $\vec \eta(0)\in H^1$
the solution  given in i)  enjoys also
$$
\vec \eta \in   C([0,T];H^1)\qquad\po-a.s.
$$
iii) 
Assume that
\[
\begin{cases}
 c  \ge 0\\
2b+ c >\frac 52
\end{cases}
\]
Then, for any $\vec \eta(0)\in H^2$
the solution given in i) enjoys also
$$
\vec \eta \in  C([0,T];H^2) \qquad\po-a.s.
$$
\end{thm}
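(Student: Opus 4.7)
The plan is to construct $\vec\eta$ pathwise by the decomposition $\vec\eta=\vec\beta+\vec\zeta$, where $\vec\zeta$ is the Ornstein--Uhlenbeck process of Section \ref{sec-ou} and $\vec\beta$ is the pathwise solution of the random system \eqref{eq-beta} delivered by Proposition \ref{pro-beta}. Since $\vec\zeta(0)=\vec 0$, the initial datum for $\vec\beta$ is exactly $\vec\eta(0)$, so the hypotheses of Proposition \ref{pro-beta} on the initial condition match those of the theorem in each of the three parts. By construction, $\vec\eta=\vec\beta+\vec\zeta$ satisfies the weak formulation in the definition of basic solution, as \eqref{eq-beta} is precisely \eqref{burg} minus the linear equation \eqref{ou}.

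The regularity statements in (i)--(iii) will be obtained by summing the regularities of the two components. In part (i), Proposition \ref{pro-beta}(i) yields $\vec\beta\in C([0,T];L_2)\cap L^2(0,T;H^{1+c})$, and Proposition \ref{pro-zeta} applied with $a=0$ and $2m=6$ gives $\vec\zeta\in C(\mathbb R_+;L_6)$ under the sharp condition $2b+c>\tfrac12$, which is exactly our hypothesis. Since $H^{1+c}\subset H^1\subset L_6$ for $c\ge 0$, we collect $\vec\eta\in C([0,T];L_2)\cap L^2(0,T;L_6)$, which also covers the integrability required in the definition of basic solution. Parts (ii) and (iii) are handled identically, combining Proposition \ref{pro-beta}(ii) or (iii) with Proposition \ref{pro-zeta} applied at $a=1$ or $a=2$ and $m=1$; the thresholds $2b+c>\tfrac32$ and $2b+c>\tfrac52$ match the $\vec\zeta$ requirement on the nose.

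For uniqueness and continuous dependence in (i), I would fix the driving $\vec w$ and take two basic solutions $\vec\eta,\vec\eta_\star$ in the stated regularity class with initial data in $L_2$. The difference $\vec\rho=\vec\eta-\vec\eta_\star$ is noise-free and satisfies, with $\vec u=\vec v-\vec v_\star$ the velocity associated to $\vec\rho$,
\[
\tfrac{\partial}{\partial t}\vec\rho+(-\Delta)^{1+c}\vec\rho+P[(\vec v_\star\cdot\nabla)\vec\rho]+P[(\vec u\cdot\nabla)\vec\eta]=\vec 0.
\]
Testing against $\vec\rho$ kills the transport term via \eqref{trilin-0}, and I estimate the remaining trilinear piece by \eqref{st1} of Lemma \ref{lemmm} (with $\vec u_1=\vec u$, $\vec u_2=\vec\rho$, $\vec u_3=\vec\eta$) together with the Biot--Savart bound $\|\vec u\|_{H^1}\le C\|\vec\rho\|_{L_2}$ recalled after \eqref{vel-vort}:
\[
|\langle(\vec u\cdot\nabla)\vec\rho,\vec\eta\rangle|\le\tfrac12\|\vec\rho\|_{H^{1+c}}^2+C\|\vec\rho\|_{L_2}^2\|\vec\eta\|_{L_3}^2.
\]
Absorbing the dissipative half and applying Gronwall to the resulting inequality yields
\[
\|\vec\rho(t)\|_{L_2}^2\le\|\vec\rho(0)\|_{L_2}^2\exp\Bigl(C\!\int_0^T\!\|\vec\eta(s)\|_{L_3}^2\,ds\Bigr),
\]
and the exponent is a.s.\ finite since $\vec\eta\in L^2(0,T;L_6)\subset L^2(0,T;L_3)$. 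This delivers both uniqueness and the continuous-dependence statement in $C([0,T];L_2)$. Parts (ii) and (iii) then require no extra uniqueness argument, as they only add regularity to the unique solution from (i).

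The main technical obstacle is the legitimacy of pairing with $\vec\rho$ in the energy identity: a candidate basic solution is only assumed in $L^2(0,T;L_6)$, while the inequality needs $L^2(0,T;H^{1+c})$ regularity to make sense of $\|\vec\rho\|_{H^{1+c}}^2$. I would overcome this by revisiting the argument of Proposition \ref{pro-beta}(i) for $\vec\beta=\vec\eta-\vec\zeta$ starting from the bare basic-solution regularity: the same energy estimate applied to $\vec\beta$ (justified via a standard mollification/Galerkin truncation step that is already used to construct $\vec\beta$) forces $\vec\beta\in L^2(0,T;H^{1+c})$ a posteriori, hence $\vec\rho\in L^2(0,T;H^{1+c})$, and the pairing is rigorous. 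Everything else is a routine adaptation of the pathwise techniques already used in Proposition \ref{pro-beta}.
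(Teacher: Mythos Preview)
Your proof is correct and follows essentially the same route as the paper: existence and regularity via $\vec\eta=\vec\beta+\vec\zeta$ combining Propositions \ref{pro-beta} and \ref{pro-zeta}, and uniqueness/continuous dependence via an energy estimate on the difference of two solutions using \eqref{trilin-0}, \eqref{st1} and the Biot--Savart gain $\|\vec u\|_{H^1}\le C\|\vec\rho\|_{L_2}$, followed by Gronwall. Your splitting of the bilinear difference is the mirror image of the paper's (you factor through $\vec v_\star$ and $\vec\eta$, the paper through $\vec v_1$ and $\vec\eta_2$), and you are in fact more careful than the paper in flagging and resolving the $L^2(0,T;H^{1+c})$ regularity needed to justify the energy identity for an arbitrary basic solution.
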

\begin{proof}
The existence comes from the existence results on  $\vec \beta$,
$\vec \zeta$. Moreover
\[
 \vec \zeta \in C([0,T];L_q) \qquad \forall q<\infty
\]
and by Sobolev embedding
\[
\vec \beta \in L^2(0,T;H^{1+ c })\subset L^2(0,T;H^{1})\subset L^2(0,T;L_6)
\]
Merging toghether the regularity of these processes we
get our results  for $\vec \eta$.

As far as continuous dependence on the initial data  is concerned, 
let us take two basic solutions $\vec \eta_1$ and $\vec \eta_2$ with 
$\vec \eta_1(0)=\vec \eta_2(0)\in L_2$; at least we have
\[
\vec \eta_1,\vec \eta_2 \in C([0,T];L_2)\cap L^2(0,T;L_3)
\]
We define $\vec y = \vec \eta_1-\vec \eta_2$; then
the system fulfilled by $\vec y$ can be written as
\[
\begin{cases}
\dfrac{\partial\vec y}{\partial t}+( -\Delta)^{1+ c }\vec y 
+P[(\vec v_1\cdot \nabla ) \vec y ]+P[\left((\vec v_1-\vec v_2)\cdot \nabla\right)
\vec \eta_2]=\vec 0
\\
\nabla \cdot \vec v_1=\nabla \cdot \vec v_2=0
\\
\vec y=\nabla \times (\vec v_1-\vec v_2)
\end{cases}
\]
We estimate the following term, as usual:
\[
\begin{split}
 |\langle [(\vec v_1-\vec v_2)\cdot \nabla] \vec \eta_2, \vec y\rangle |
&= |\langle [(\vec v_1-\vec v_2)\cdot \nabla] \vec y,\vec \eta_2\rangle |
\\
& \le \frac 1 2 \|\vec y\|_{H^{1+ c }} ^2 
+ C \|\vec \eta_2\|^2_{L_3} \|\vec v_1-\vec v_2\|^2_{H^1} \text{ from }\eqref{st1}
\\
& \le \frac 1 2 \|\vec y\|_{H^{1+ c }} ^2 
+ C  \|\vec \eta_2\|^2_{L_3} \|\vec y\|^2_{L_2} 
\end{split}
\]
Then taking the scalar product of the  the first equation for $\vec y$
with $\vec y$,  integrating on the spatial domain
and using \eqref{trilin}, we get
\[
\frac{d}{dt} \|\vec y(t)\|^2_{L_2} +  \|\vec y(t)\|^2_{H^{1+c}}
\le  
C  \|\vec \eta_2(t)\|^2_{L_3} \|\vec y(t)\|^2_{L_2} 
\]
Recall that   $\vec \eta_2 \in L^2(0,T;L_3)$.
Applying Gronwall lemma
to 
\[
\frac{d}{dt} \|\vec y(t)\|^2_{L_2} 
\le  
C  \|\vec \eta_2(t)\|^2_{L_3} \|\vec y(t)\|^2_{L_2} 
\]
 we get
$$
\sup_{0\le t\le T}\|\vec y(t)\|_{L_2}\le \|\vec y(0)\|_{L_2}e^{C
  \int_0^T \|\vec \eta_2(t)\|^2_{L_3} dt}
$$
This gives the continuous dependence on the
initial data;   uniqueness is obtained when $\vec y(0)=\vec 0$.
\end{proof}

\section{The vorticity equation}\label{sec-vor}
Now we consider the full nonlinear system \eqref{vort}.
If the initial velocity is more regular, say $\vec
v(0)\in H^1$ (i.e. $\vec \xi(0)\in L_2$),
one can prove a local existence and uniqueness result for $ c =0$;
global existence holds only for $ c \ge \frac 14$ (see \cite{fe-proc}).
In this paper we improve the results for $ c \ge \frac 14$
considering initial data $\vec \xi(0)\in H^1$ and $H^2$.

We need a preliminary result for the velocity, fulfilling \eqref{sis-nonlin}
 with  
the noise obtained from a  Wiener process $\vec w_{vel}$ such that
$\nabla \times \vec w_{vel}=(-\Delta)^{-b}\vec w$, that is 
\[
\vec w_{vel}(t,\vec x)=\sum_{\vec k\in \mathbb Z^3_0} |\vec k|^{-2b-1}
 [-\vec b_{\vec k,1}\beta_{\vec k,1}(t) +\vec b_{\vec
     k,2}\beta_{\vec k,2}(t)]e^{i\vec k \cdot \vec x}
\]
Therefore \eqref{sis-nonlin} becomes
\begin{equation}\label{NSwvel}
\begin{cases}
 d\vec v+ (-\Delta)^{1+ c }\vec v \ dt 
+ (\vec v\cdot \nabla)\vec v \ dt +\nabla p \ dt
 =d\vec w_{vel}
\\
\nabla \cdot \vec v =0
\end{cases}
\end{equation}

\begin{prp}\label{theta-v}
Assume that
\[
\begin{cases}
 c  \ge 0\\
b>\frac 14
\end{cases}
\]
Then for any $\vec v(0)\in L_2$ there exists a process 
$\vec v$ with $\po$-a.e. path in $L^\infty(0,T;L_2)\cap L^2(0,T;H^{1+ c })$,
solving \eqref{NSwvel}.
\end{prp}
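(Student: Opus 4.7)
The plan is the pathwise decomposition $\vec v=\vec u+\vec z$. First, define the Ornstein--Uhlenbeck process
\[
d\vec z+(-\Delta)^{1+c}\vec z\,dt=d\vec w_{vel},\qquad \vec z(0)=\vec 0.
\]
Comparing the Fourier expansion of $\vec w_{vel}$ with that of $\vec w$ in \eqref{cilindrico} shows that $\vec w_{vel}$ has the same law as $(-\Delta)^{-(b+1/2)}\vec w$ (the only discrepancy is a sign flip on $\vec b_{\vec k,1}$, which does not affect the distribution). Therefore Proposition \ref{pro-zeta} applied with $b$ replaced by $b+\tfrac{1}{2}$ gives
\[
\vec z\in C([0,T];H^a_{2m})\quad \po\text{-a.s.}\quad\text{whenever}\quad a<2b+c+\tfrac{1}{2},\ m\in\mathbb N.
\]
Under $b>\tfrac{1}{4}$ and $c\ge 0$ we have $2b+c+\tfrac{1}{2}>1$, so I may pick $a\in(1,2b+c+\tfrac{1}{2})$ and take $m$ as large as needed. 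Sobolev embedding then delivers $\vec z,\nabla \vec z\in L^\infty(0,T;L_p)$ $\po$-a.s.\ for every $p<\infty$.

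The remainder $\vec u=\vec v-\vec z$ satisfies the random (nonstochastic) PDE
\[
\partial_t \vec u+(-\Delta)^{1+c}\vec u+P[((\vec u+\vec z)\cdot\nabla)(\vec u+\vec z)]=\vec 0,\qquad \vec u(0)=\vec v(0).
\]
I would construct $\vec u$ pathwise for $\po$-a.e.\ $\omega$ via a Galerkin scheme on $H_n=\Pi_nL_2$; the projected ODE has a global-in-time solution $\vec u_n$ once the uniform energy bound below is available to preclude blow-up.

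The core estimate is the energy identity. Testing the equation against $\vec u_n$ and using the divergence-free cancellation $\langle P[(\vec w\cdot\nabla)\vec u_n],\vec u_n\rangle=0$ (eq.\ \eqref{trilin-0}), only two nonlinear terms survive on the right: $-\langle(\vec u_n\cdot\nabla)\vec z,\vec u_n\rangle-\langle(\vec z\cdot\nabla)\vec z,\vec u_n\rangle$. By H\"older and the 3D interpolation $\|\vec u_n\|_{L_3}^2\le C\|\vec u_n\|_{L_2}\|\vec u_n\|_{H^1}$, followed by Young's inequality,
\[
|\langle(\vec u_n\cdot\nabla)\vec z,\vec u_n\rangle|\le \tfrac{1}{4}\|\vec u_n\|_{H^{1+c}}^2+C\|\nabla\vec z\|_{L_3}^2\|\vec u_n\|_{L_2}^2,
\]
while $|\langle(\vec z\cdot\nabla)\vec z,\vec u_n\rangle|\le \|\vec z\|_{L_4}\|\nabla\vec z\|_{L_4}\|\vec u_n\|_{L_2}$. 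Since the $\vec z$-factors are all $L^\infty(0,T)$ $\po$-a.s.\ by the first step, absorbing the $H^{1+c}$ contribution into the dissipation and applying Gronwall yields $\vec u_n\in L^\infty(0,T;L_2)\cap L^2(0,T;H^{1+c})$ uniformly in $n$.

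The principal obstacle is the passage to the limit: with $c\ge 0$ allowed (in particular $c=0$) we are in the Leray regime for 3D Navier--Stokes, so no uniqueness should be expected. I would extract a subsequence converging weakly in $L^2(0,T;H^{1+c})$ and weak-$*$ in $L^\infty(0,T;L_2)$, and then upgrade to strong convergence in $L^2(0,T;L_2)$ by Aubin--Lions, for which a complementary bound on $\partial_t\vec u_n$ in some $L^q(0,T;H^{-s})$ (obtained by testing the equation against smooth divergence-free fields and using the energy bounds) is required. Such strong convergence is sufficient to identify the limit of the quadratic term; setting $\vec v=\vec u+\vec z$ produces the claimed solution, which is adapted because both $\vec u$ and $\vec z$ are.
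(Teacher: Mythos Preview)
Your argument is correct, but it takes a genuinely different route from the paper's proof. The paper explicitly departs from the pathwise technique at this single point (``Only here we use mean value estimates instead of the pathwise ones''): it cites the $c=0$ case from \cite{cime}, then records the one-line It\^o energy estimate
\[
\mathbb E\|\vec v(t)\|_{L_2}^2+2\int_0^t\mathbb E\|\vec v(s)\|_{H^{1+c}}^2\,ds\le \|\vec v(0)\|_{L_2}^2+t\sum_{\vec k\in\mathbb Z^3_0}|\vec k|^{-2(2b+1)},
\]
whose trace term converges exactly when $b>\tfrac14$. By contrast, you carry out the pathwise decomposition $\vec v=\vec u+\vec z$ that the paper uses everywhere else (Propositions~\ref{pro-beta} and~\ref{ris-beta}): you identify the law of $\vec w_{vel}$ with that of $(-\Delta)^{-(b+1/2)}\vec w$, invoke Proposition~\ref{pro-zeta} with $b\mapsto b+\tfrac12$ to obtain $\vec z\in C([0,T];H^a_{2m})$ for some $a>1$, derive Gronwall-closable energy bounds for the Galerkin approximations of $\vec u$, and pass to the limit via Aubin--Lions. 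The paper's approach is much shorter but leans on an external reference for the actual construction; your approach is self-contained and entirely consistent with the machinery developed in Sections~\ref{sec-ou}--\ref{sec-eta}, at the cost of being longer. Both arrive at the same threshold $b>\tfrac14$.
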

\begin{proof}
We know the result for $ c =0$ (see \cite{cime}); the case
$ c >0$ does not provide any difficulty. But we show the
shortest way to get it, by means of mean value estimates. Only here we
use mean value estimates instead of the pathwise ones.

We write the basic energy estimate obtained
from It\^o formula for $d\|\vec v(t)\|_{L_2}^2$; the details can be
found in \cite{cime}. We have
\[
\mathbb E \|\vec v(t)\|_{L_2}^2 +2\int_0^t \mathbb E \|\vec
v(s)\|_{H^{1+ c }}^2ds\le \|\vec v(0)\|_{L_2}^2
+t \sum_{\vec k\in \mathbb Z^3_0} |\vec k|^{-2(2b+1)}
\]
The series in the r.h.s. converges if and only if $2(2b+1)>3$, i.e.  
$b>\frac 14$. These estimates improves the regularity:
$\vec v \in  L^2(0,T;H^{1+ c })$, $\po$-a.s.
\end{proof}

Now we consider the unknown $\vec \xi$. 
Let $\vec \delta:=\vec \xi-\vec \zeta$; bearing in mind the equations
for $\vec \xi$ and $\vec \zeta$ we have that
 this new unknown satisfies
\begin{equation}\label{ausil}
\frac{\partial\vec \delta}{\partial t} + (-\Delta)^{1+ c }\vec \delta 
   + P[(\vec v\cdot \nabla)\vec \delta -  (\vec \delta \cdot \nabla) \vec v+
     (\vec v\cdot \nabla)\vec \zeta -  (\vec \zeta \cdot \nabla) \vec v]=\vec 0
\end{equation}
Now the quantities $\vec v$ and $\vec \delta$ are linked through
 $\vec \delta=-\vec \zeta+\nabla \times\vec v$.

Our aim is to find  existence and regularity results for $\vec \delta$ in order
to obtain the same results for $\vec \xi$. This requires
$ c \ge\frac 14$.

As in the previous section we look for pathwise results.
\begin{prp}\label{ris-beta}
i) Assume that
\[
\begin{cases}
 c  \ge \frac 14\\
b>\frac 14
\end{cases}
\]
Then, for any $\vec\delta(0)\in L_2$ there exists  
a solution to \eqref{ausil} such
that 
$$
\vec \delta \in   C([0,T];L_2) \cap  L^2(0,T;H^{1+ c })
\qquad\po-a.s.
$$
ii) 
 Assume that
\[
\begin{cases}
 c  \ge \frac 14\\
b>\frac 14\\
2b+ c >\frac 32
\end{cases}
\]
Then, for any $\vec\delta(0)\in H^1$
the solution  given in i)  enjoys also
$$
\vec \delta \in C([0,T];H^1) \cap  L^2(0,T;H^{2+ c }) \qquad\po-a.s.
$$
iii) 
Assume that
\[
\begin{cases}
 c  \ge \frac 14\\
b>\frac 14\\
2b+ c >\frac 52
\end{cases}
\]
Then, for any $\vec\delta(0)\in H^2$
the solution  given in i)  enjoys also
$$
\vec \delta \in C([0,T];H^2) \cap  L^2(0,T;H^{3+ c }) \qquad\po-a.s.
$$
\end{prp}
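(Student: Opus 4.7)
The plan is to mimic the pathwise Galerkin scheme of Proposition \ref{pro-beta}, the essential novelty being the vortex stretching term $P[(\vec\delta\cdot\nabla)\vec v]$ in \eqref{ausil}, which is absent in \eqref{eq-beta}. The control of this term cannot be closed using only information on $\vec\delta$ itself, so I would feed in the a priori bound $\vec v\in L^\infty(0,T;L_2)\cap L^2(0,T;H^{1+c})$ provided by Proposition \ref{theta-v}: its time-integrable $H^{1+c}$-norm plays the role of an external weight in the Gronwall step.

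For part (i), I would pair \eqref{ausil} with $\vec\delta$ in $L_2$. The pure transport term $\langle P[(\vec v\cdot\nabla)\vec\delta],\vec\delta\rangle$ disappears by \eqref{trilin-0}, and the stretching term $\langle(\vec\delta\cdot\nabla)\vec v,\vec\delta\rangle$ I would estimate by Lemma \ref{st-difficile} as
\[
\epsilon\|\vec\delta\|^2_{H^{1+c}}+\tfrac{C}{\epsilon}\|\vec v\|^2_{H^{1+c}}\|\vec\delta\|^2_{L_2},
\]
which is exactly where the hypothesis $c\ge\tfrac14$ enters. The remaining mixed terms $\langle(\vec v\cdot\nabla)\vec\zeta,\vec\delta\rangle$ and $\langle(\vec\zeta\cdot\nabla)\vec v,\vec\delta\rangle$ I would control via \eqref{st1} and \eqref{st3} of Lemma \ref{lemmm}; the resulting right-hand sides only involve $\|\vec\zeta\|_{L_3}$ (finite a.s., since $b>\tfrac14$ and $c\ge\tfrac14$ give $2b+c>\tfrac12$, so $\vec\zeta\in C([0,T];L_4)\subset C([0,T];L_3)$ by Proposition \ref{pro-zeta}) together with $\|\vec v\|_{H^1}$ and $\|\vec v\|_{H^{1+c}}$. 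After absorbing all $\epsilon$-terms into the dissipation, the inequality takes the form
\[
\tfrac{d}{dt}\|\vec\delta\|^2_{L_2}+\|\vec\delta\|^2_{H^{1+c}}\le C\|\vec v\|^2_{H^{1+c}}\|\vec\delta\|^2_{L_2}+g(t),
\]
with $g\in L^1(0,T)$ and $\int_0^T\|\vec v\|^2_{H^{1+c}}\,dt<\infty$ a.s., so Gronwall closes the $L^\infty(0,T;L_2)$ bound and integration yields $L^2(0,T;H^{1+c})$. Continuity in $L_2$ follows from $\partial_t\vec\delta\in L^2(0,T;H^{-1-c})$ together with Ch.~III Lemma 1.2 in \cite{temam}, and the passage to the limit in the Galerkin scheme is routine.

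For parts (ii) and (iii), I would test with $-\Delta\vec\delta$ and $(-\Delta)^2\vec\delta$ respectively, proceeding in parallel with Proposition \ref{pro-beta}(ii)(iii). The new contributions coming from the stretching nonlinearity, of the form $\langle(\vec\delta\cdot\nabla)\vec v,(-\Delta)^k\vec\delta\rangle$, are tamed via the Sobolev embedding $H^2\subset L_\infty$ and the equivalence $\|\vec v\|_{H^{k+1}}\sim\|\vec\xi\|_{H^k}$ to re-express $\vec v$-derivatives through $\vec\delta$ and $\vec\zeta$; the product estimate from the proof of Proposition \ref{pro-beta}(iii) then allows one to absorb the top-order piece into $\|\vec\delta\|^2_{H^{2+c}}$ or $\|\vec\delta\|^2_{H^{3+c}}$ and close via Gronwall. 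The thresholds $2b+c>\tfrac32$ and $2b+c>\tfrac52$ are precisely those needed to guarantee $\vec\zeta\in C([0,T];H^1)$ and $C([0,T];H^2)$ respectively via Proposition \ref{pro-zeta}.

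The main obstacle is the stretching term $\langle(\vec\delta\cdot\nabla)\vec v,\vec\delta\rangle$ at the base level (i): attempting to close the $L_2$ estimate with purely internal information (using $\|\vec v\|_{H^{1+c}}^2\lesssim\|\vec\delta\|_{H^c}^2+\|\vec\zeta\|_{H^c}^2$ and interpolation) produces a superlinear term $\|\vec\delta\|_{L_2}^{4+2c}$ and hence only a local-in-time bound. It is the external integrability of $\|\vec v(t)\|_{H^{1+c}}$ supplied by Proposition \ref{theta-v}, together with the structural constraint $c\ge\tfrac14$ coming from Lemma \ref{st-difficile}, that makes the Gronwall coefficient linear and the bounds global in time.
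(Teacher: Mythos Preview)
Your proposal is correct and follows essentially the same route as the paper: pair \eqref{ausil} with $\vec\delta$, $-\Delta\vec\delta$, $(-\Delta)^2\vec\delta$, use Lemma \ref{st-difficile} for the stretching term at the base level (this is where $c\ge\tfrac14$ enters), feed in $\vec v\in L^2(0,T;H^{1+c})$ from Proposition \ref{theta-v} as the Gronwall weight, and recycle the estimates of Proposition \ref{pro-beta} for the transport terms. One small slip: for $\langle(\vec v\cdot\nabla)\vec\zeta,\vec\delta\rangle$ you want \eqref{st2}, not \eqref{st1}, since the $\epsilon$-term must land on $\|\vec\delta\|_{H^{1+c}}^2$ (your description of the resulting right-hand side is already consistent with \eqref{st2}, so this is only a mislabel).
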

\begin{proof}
i) First, notice that if $c\ge \frac 14$ and $b>\frac 14$ then  
$2b+ c >\frac 34>\frac 12$. Therefore  Proposition  \ref{pro-zeta} provides  that
for any finite $p$  we have $\vec \zeta \in C([0,T];L_p) $ a.s..

We deal with \eqref{ausil} 
as we did with \eqref{eq-beta}. So
\[
\frac 12 \frac{d}{dt}\|\vec \delta(t)\|_{L_2}^2 
  + \|\vec \delta(t)\|^2_{H^{1+c}} 
=
\langle (\vec \delta \cdot \nabla)\vec  v-
 (\vec v\cdot \nabla)\vec  \zeta
 +(\vec \zeta \cdot \nabla) \vec v, \vec \delta \rangle
\]
From Lemma \ref{st-difficile}
\[
\langle (\vec \delta \cdot \nabla)\vec v,\vec \delta\rangle
\le
\tfrac 16 \|\vec\delta\|^2_{H^{1+c}}
 +C \|\vec v\|_{H^{1+c}}^2  \|\vec \delta\|_{L_2}^2
\]
From \eqref{st2} of Lemma \ref{lemmm}
\[
|\langle(\vec v\cdot \nabla)\vec \zeta , \vec\delta\rangle|
\le  
\tfrac 16 \|\vec \delta\|^2_{H^{1+c}}
  + C \|\vec v \|_{H^1}^2 \|\vec \zeta\|^2_{L_3}
\]
From \eqref{st3} of Lemma \ref{lemmm}
\[
|\langle (\vec \zeta \cdot \nabla)\vec v, \vec \delta\rangle|
\le 
\tfrac 16 \|\vec \delta\|^2_{H^{1+c}}
 + C \|\vec v\|_{H^{1}}^2 \|\vec \zeta\|^2_{L_3}
\]

Summing up, we get
\[
\frac{d}{dt}\|\vec \delta(t)\|_{L_2}^2 
  +  \|\vec\delta(t)\|^2_{H^{1+ c }} 
\le C
\|\vec v(t)\|_{H^{1+ c }}^2 \|\vec \delta(t)\|_{L_2}^2
+ C \|\vec \zeta(t)\|_{L_3}^2 \|\vec v(t)\|_{H^{1+ c }}^2
\]

From Proposition \ref{theta-v}, 
we know that $\vec v \in L^2(0,T;H^{1+ c })$; moreover our
assumption with Proposition \ref{pro-zeta} give $\vec \zeta \in
C([0,]T; H^1_3)$. Then by Gronwall lemma we get
\[
\sup_{0\le t\le T} \|\vec \delta(t)\|_{L_2}^2 <\infty
\]
and integrating in time
\[
 \int_0^T \|\vec \delta(t)\|^2_{H^{1+ c }} \ dt<\infty
\]
The continuity in time is obtained as in
Proposition \ref{pro-beta}.

ii)
 We need a priori estimates and we proceed as in the previous step.
We multiply the l.h.s. of the first equation of \eqref{ausil}
by $-\Delta \vec\delta(t)$ and integrate on $D$. We get
\begin{multline*}
\frac 12 \frac{d}{dt}\|\vec \delta(t)\|_{H^1}^2
+ \|\vec \delta(t)\|_{H^{2+c}}^2
\\=\langle (\vec  v(t)\cdot \nabla)(\vec  \delta(t)
+ \vec \zeta(t)),\Delta \vec\delta(t)\rangle
-\langle ((\vec \delta(t)+\vec \zeta(t))\cdot \nabla) \vec v(t),
   \Delta \vec\delta(t)\rangle
\end{multline*}
We estimate the latter term in the r.h.s. as usual:
\[\begin{split}
|\langle ((\vec \delta+\vec \zeta)\cdot \nabla) \vec v,
   \Delta \vec\delta\rangle|
&\le
\|\vec \delta+\vec \zeta\|_{L_4} \|\nabla \vec v\|_{L_4} \|\Delta \vec\delta\|_{L_2}
\\&
\le C 
\|\vec \delta+\vec \zeta\|_{H^1} \|\nabla \vec v\|_{H^1}
\|\vec\delta\|_{H^2}
\\&
\le C_c
 \|\vec \delta+\vec \zeta\|_{H^1}^2
\|\vec\delta\|_{H^{2+c}}
\\&
\le \frac 14 \|\vec\delta\|_{H^{2+c}}^2
+ C  \|\vec \delta\|_{H^1}^4
+ C \|\vec \zeta\|_{H^1}^4
\end{split}
\]

With this estimate and dealing with  the other trilinear term as in 
the proof of Proposition
\ref{pro-beta} ii), we obtain
\[
\frac{d}{dt}\|\vec \delta(t)\|_{H^1}^2
+ \|\vec \delta(t)\|_{H^{2+c}}^2
\le C \|\vec \delta(t)\|_{H^1}^4 +\|\vec \zeta(t)\|_{H^1}^4
\]
Since $\vec \delta \in L^2(0,T;H^1)$ from the previous step and 
$\vec \zeta \in C([0,T];H^1)$ from Proposition \ref{pro-zeta},
we conclude  as in the proof of Proposition
\ref{pro-beta} ii).

iii)
We multiply the l.h.s. of the first equation of \eqref{ausil}
by $(-\Delta)^2 \vec\delta(t)$ and integrate on $D$. We get
\begin{multline*}
\frac 12 \frac{d}{dt}\|\vec \delta(t)\|_{H^2}^2
+ \|\vec \delta(t)\|_{H^{3+c}}^2
=-\langle (\vec  v(t)\cdot \nabla)(\vec  \delta(t)
+ \vec \zeta(t)),(-\Delta)^2 \vec\delta(t)\rangle
\\+\langle ((\vec \delta(t)+\vec \zeta(t))\cdot \nabla) \vec v(t),
   (-\Delta)^2 \vec\delta(t)\rangle
\end{multline*}
We are left to estimate the latter trilinear term.
First, we use the estimate for the product; by means of 
the Sobolev embeddings $H^2\subset L_\infty$ and $H^1\subset L_4$ we get
\[\begin{split}
\|fg\|_{H^1}
\le \|g\nabla f\|_{L_2}+\|f\nabla g\|_{L_2}
&\le \|\nabla f\|_{L_2} \|g\|_{L_\infty}+\|f\|_{L_4}\|\nabla g\|_{L_4}
\\&\le C \|f\|_{H^1} \|g\|_{H^2}+C\|f\|_{H^1}\|g\|_{H^2}
\end{split}\]
Hence, for the trilinear term we get
\[\begin{split}
\langle ((\vec \delta+\vec \zeta)\cdot \nabla) \vec v,
   (-\Delta)^2 \vec\delta\rangle
&\le  \|((\vec \delta+\vec \zeta)\cdot \nabla) \vec v\|_{H^1}
\|\vec\delta\|_{H^3}
\\&
\le C \|\vec \delta+\vec \zeta\|_{H^1}
\|\nabla \vec v\|_{H^2}\|\vec\delta\|_{H^3}
\\&
\le C_c \|\vec \delta+\vec \zeta\|_{H^1}
\|\vec \delta+\vec \zeta\|_{H^2}\|\vec\delta\|_{H^{3+c}}
\\&
\le \frac 14 \|\vec\delta\|_{H^{3+c}}^2+
C\|\vec \delta+\vec \zeta\|^2_{H^1}\|\vec \delta\|^2_{H^2}
+C\|\vec \delta+\vec \zeta\|^2_{H^1}\|\vec \zeta\|^2_{H^2}
\end{split}
\]
Therefore, keeping in mind the proof of Proposition \ref{pro-beta} iii)
to estimate the other trilinear term, we obtain
\[
\frac{d}{dt}\|\vec \delta(t)\|_{H^2}^2
+ \|\vec \delta(t)\|_{H^{3+c}}^2\le
C\|\vec \delta(t)+\vec \zeta(t)\|^2_{H^1}\|\vec \delta(t)\|^2_{H^2}
+C\|\vec \delta(t)+\vec \zeta(t)\|^2_{H^1}\|\vec \zeta(t)\|^2_{H^2}
\]
Since $\vec \delta \in L^2(0,T;H^2)$ from the previous step and 
$\vec \zeta \in C([0,T];H^2)$ from Proposition \ref{pro-zeta},
we conclude  as in the proof of Proposition
\ref{pro-beta} iii).
\end{proof}

Now we have the result for $\vec \xi=\vec \delta+\vec \zeta$.
\begin{thm}\label{theorem1}
i)
Assume that
\[
\begin{cases}
 c  \ge \frac 14\\
b>\frac 14
\end{cases}
\]
Then, for any $\vec\xi(0)\in L_2$ there exists  
a unique process $\vec \xi$ 
 which is a basic solution to \eqref{vort} such that
\[
 \vec \xi \in 
C([0,T];L_2) \cap   L^2(0,T;L_6)
\]
$\po$-a.s.

Moreover there is continuous dependence on the initial data:
given two
initial data $\vec \xi(0),\vec \xi_\star(0)\in L_2$ we have
\[
 \|\vec \xi(0)-\vec \xi_\star(0)\|_{L_2} \to 0 \Longrightarrow 
 \|\vec\xi-\vec \xi_\star\|_{C([0,T];L_2)} \to 0 
\]
ii)
 Assume that
\[
\begin{cases}
 c  \ge \frac 14\\
b>\frac 14\\
2b+ c >\frac 32
\end{cases}
\]
Then, for any $\vec\xi(0)\in H^1$
the solution  given in i)  enjoys also
$$
\vec \xi \in C([0,T];H^1)  \qquad\po-a.s.
$$
iii) 
Assume that
\[
\begin{cases}
 c  \ge \frac 14\\
b>\frac 14\\
2b+ c >\frac 52
\end{cases}
\]
Then, for any $\vec\xi(0)\in H^2$
the solution  given in i)  enjoys also
$$
\vec \xi \in C([0,T];H^2) \qquad\po-a.s.
$$
\end{thm}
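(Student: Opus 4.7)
The plan is to deduce the theorem directly from what has already been established: the existence of $\vec\delta$ in Proposition \ref{ris-beta} together with the regularity of the Ornstein--Uhlenbeck process $\vec\zeta$ in Proposition \ref{pro-zeta}, using $\vec\xi=\vec\delta+\vec\zeta$. Under the hypotheses $c\ge\tfrac14$, $b>\tfrac14$ we have $2b+c>\tfrac12$, so Proposition \ref{pro-zeta} gives $\vec\zeta\in C([0,T];H^a_{2m})$ for any $m$ and any $a$ with $a<2b+c-\tfrac12$; in particular $\vec\zeta\in C([0,T];L_6)$. From Proposition \ref{ris-beta}(i), $\vec\delta\in C([0,T];L_2)\cap L^2(0,T;H^{1+c})$, and since $c\ge\tfrac14$ gives $H^{1+c}\subset H^1\subset L_6$, we conclude $\vec\xi\in C([0,T];L_2)\cap L^2(0,T;L_6)$. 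That $\vec\xi$ is a basic solution follows by adding the two equations for $\vec\delta$ and $\vec\zeta$. For parts ii) and iii) the strengthened assumption $2b+c>\tfrac32$ (resp. $\tfrac52$) promotes $\vec\zeta$ to $C([0,T];H^1)$ (resp. $C([0,T];H^2)$), while Proposition \ref{ris-beta}(ii),(iii) gives the matching regularity for $\vec\delta$; summing yields $\vec\xi\in C([0,T];H^1)$ (resp. $H^2$).

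The core technical step, and what I expect to be the main obstacle, is pathwise uniqueness with continuous dependence on the initial data. Given two basic solutions $\vec\xi_1,\vec\xi_2$ with associated velocities $\vec v_1,\vec v_2$, I would set $\vec y=\vec\xi_1-\vec\xi_2$ and write the equation it satisfies, namely
\[
\frac{\partial\vec y}{\partial t}+(-\Delta)^{1+c}\vec y
+P[(\vec v_1\cdot\nabla)\vec y]+P[((\vec v_1-\vec v_2)\cdot\nabla)\vec\xi_2]
-P[(\vec y\cdot\nabla)\vec v_1]-P[(\vec\xi_2\cdot\nabla)(\vec v_1-\vec v_2)]=\vec 0.
\]
Taking the $L_2$ scalar product with $\vec y$ kills the pure transport term by \eqref{trilin-0}. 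There remain three trilinear terms, and the new one compared with the vorticity-transport case of Theorem \ref{eta-completo} is the stretching contribution $\langle(\vec y\cdot\nabla)\vec v_1,\vec y\rangle$. This is precisely the term that forced the restriction $c\ge\tfrac14$: I would control it via Lemma \ref{st-difficile}, getting
\[
|\langle(\vec y\cdot\nabla)\vec v_1,\vec y\rangle|\le
\tfrac{1}{6}\|\vec y\|_{H^{1+c}}^2+C\|\vec v_1\|_{H^{1+c}}^2\|\vec y\|_{L_2}^2.
\]

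For the remaining mixed terms, note that $\nabla\times(\vec v_1-\vec v_2)=\vec y$, so $\|\vec v_1-\vec v_2\|_{H^1}\sim\|\vec y\|_{L_2}$. I would apply \eqref{st1} of Lemma \ref{lemmm} to $\langle((\vec v_1-\vec v_2)\cdot\nabla)\vec\xi_2,\vec y\rangle$ (with $\vec u_1=\vec v_1-\vec v_2$, $\vec u_2=\vec\xi_2$, $\vec u_3=\vec y$ after transposition via \eqref{trilin}) to bound it by $\tfrac16\|\vec y\|_{H^{1+c}}^2+C\|\vec\xi_2\|_{L_3}^2\|\vec y\|_{L_2}^2$, and \eqref{st3} (or \eqref{st1}) to $\langle(\vec\xi_2\cdot\nabla)(\vec v_1-\vec v_2),\vec y\rangle$, producing the same type of bound. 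Absorbing the $\tfrac12\|\vec y\|_{H^{1+c}}^2$ on the left yields
\[
\frac{d}{dt}\|\vec y(t)\|_{L_2}^2+\|\vec y(t)\|_{H^{1+c}}^2
\le C\bigl(\|\vec v_1(t)\|_{H^{1+c}}^2+\|\vec\xi_2(t)\|_{L_3}^2\bigr)\|\vec y(t)\|_{L_2}^2.
\]
Since $\vec v_1\in L^2(0,T;H^{1+c})$ by Proposition \ref{theta-v} and $\vec\xi_2\in L^2(0,T;L_3)$ by part i) already proved, the prefactor is integrable in time, and Gronwall's lemma gives
\[
\sup_{0\le t\le T}\|\vec y(t)\|_{L_2}\le\|\vec y(0)\|_{L_2}\exp\Bigl(C\int_0^T\bigl(\|\vec v_1(t)\|_{H^{1+c}}^2+\|\vec\xi_2(t)\|_{L_3}^2\bigr)dt\Bigr),
\]
which delivers both uniqueness (set $\vec y(0)=\vec 0$) and continuous dependence on the initial datum in $L_2$.
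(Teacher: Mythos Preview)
Your argument is correct and follows the same overall scheme as the paper: existence and regularity come from $\vec\xi=\vec\delta+\vec\zeta$ via Propositions~\ref{ris-beta} and~\ref{pro-zeta}, and uniqueness/continuous dependence follow by writing the equation for $\vec y=\vec\xi_1-\vec\xi_2$, testing against $\vec y$, and applying Gronwall.

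There is one genuine difference in the treatment of the stretching terms. You split the stretching difference as $(\vec y\cdot\nabla)\vec v_1+(\vec\xi_2\cdot\nabla)(\vec v_1-\vec v_2)$ and control the quadratic piece $\langle(\vec y\cdot\nabla)\vec v_1,\vec y\rangle$ via Lemma~\ref{st-difficile}, which produces the Gronwall weight $\|\vec v_1\|_{H^{1+c}}^2$ and forces you to invoke Proposition~\ref{theta-v}. The paper instead splits as $(\vec\xi_1\cdot\nabla)(\vec v_1-\vec v_2)+(\vec y\cdot\nabla)\vec v_2$ and estimates the quadratic piece directly by
\[
|\langle(\vec y\cdot\nabla)\vec v_2,\vec y\rangle|
\le \|\vec y\|_{L_2}\,\|\nabla\vec v_2\|_{L_3}\,\|\vec y\|_{L_6}
\le C\|\vec\xi_2\|_{L_3}\,\|\vec y\|_{L_2}\,\|\vec y\|_{H^{1+c}},
\]
so that the final Gronwall weight is $C\big(\|\vec\xi_1\|_{L_3}^2+\|\vec\xi_2\|_{L_3}^2\big)$, depending only on the $L_3$-regularity already established for the solutions themselves. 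This is slightly cleaner---it keeps the uniqueness argument self-contained and does not appeal to the separate velocity estimate of Proposition~\ref{theta-v}---but your route is equally valid since under the standing hypotheses the constructed solution does satisfy $\vec v_1\in L^2(0,T;H^{1+c})$.
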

\begin{proof}
i)
If $c\ge \frac 14$ and $b>\frac 14$ then  
$2b+ c >\frac 12$. Therefore  Proposition  \ref{pro-zeta} provides  that
for any finite $p$  we have $\vec \zeta \in C([0,T];L_p) $ a.s..
We merge the results of Proposition \ref{ris-beta} for $\vec \delta$
with those of Proposition \ref{pro-zeta} for $\vec \zeta$  
to get existence of $\vec \xi$ and its regularity. This is the same as
in Theorem \ref{eta-completo}.

As far as continuous dependence on the initial data
 is concerned, we proceed as in the proof of
Theorem \ref{eta-completo}. The additional term does not give any
problem; we estimate it as follows. Set $\vec y = \vec \xi_1-\vec
\xi_2$; then
the system fulfilled by $\vec y$ can be written as
\[
\begin{cases}
\dfrac{\partial\vec y}{\partial t}+( -\Delta)^{1+ c }\vec y 
+P[(\vec v_1\cdot \nabla ) \vec y 
   +\left((\vec v_1-\vec v_2)\cdot \nabla\right)\vec \xi_2
   - (\vec \xi_1\cdot \nabla)(\vec v_1-\vec v_2)
   -(\vec y\cdot \nabla)\vec v_2
]=\vec 0
\\
\nabla \cdot \vec v_1=\nabla \cdot \vec v_2=0
\\
\vec y=\nabla \times (\vec v_1-\vec v_2)
\end{cases}
\]
Therefore,
in the equation fulfilled by $\|\vec y(t)\|_{L_2}^2$,
in addition to the terms appearing in the proof of Theorem
\ref{eta-completo}
we also have
\[
\langle (\vec \xi_1 \cdot \nabla)(\vec v_1-\vec v_2), \vec y\rangle+
\langle (\vec y \cdot \nabla)\vec v_2, \vec y\rangle
\]
We have
\[
\begin{split}
|\langle (\vec \xi_1 \cdot \nabla)(\vec v_1-\vec v_2), \vec y\rangle|
&\le 
\|\vec \xi_1\|_{L_3} \|\nabla (\vec v_1-\vec v_2)\|_{L_2} \|\vec y\|_{L_6}
\\&\le C
\|\vec \xi_1\|_{L_3} \|\vec y\|_{L_2} \|\vec y\|_{H^1}
\\&\le C_ c 
\|\vec \xi_1\|_{L_3} \|\vec y\|_{L_2} \|\vec y\|_{H^{1+c}}
\\&\le 
\frac 16 \|\vec y\|_{H^{1+ c }} ^2 
+ C  \|\vec \xi_1\|^2_{L_3} \|\vec y\|^2_{L_2} 
\end{split}
\]
and
\[
\begin{split}
|\langle (\vec y \cdot \nabla)\vec v_2, \vec y\rangle|
&\le 
\|\vec y\|_{L_2} \|\nabla \vec v_2\|_{L_3} \|\vec y\|_{L_6}
\\&\le C 
\|\vec y\|_{L_2} \|\vec \xi_2\|_{L_3} \|\vec y\|_{H^1}
\\&\le C_ c 
\|\vec y\|_{L_2} \|\vec \xi_2\|_{L_3} \|\vec y\|_{H^{1+c}}
\\&\le 
\frac  16 \|\vec y\|_{H^{1+ c }} ^2 +
C \|\vec \xi_2\|^2_{L_3} \|\vec y\|^2_{L_2} 
\end{split}
\]
Therefore
\[
\frac{d}{dt} \|\vec y(t)\|^2_{L_2}
\le  
C
\big(\|\vec \xi_1(t)\|^2_{L_3}+\|\vec \xi_2(t)\|^2_{L_3}\big) 
\|\vec y(t)\|^2_{L_2} 
\]
By Gronwall lemma, we get continuous dependence on the
initial data. Uniqueness is obtained when $\vec y(0)=\vec 0$
\end{proof}

\section{Equivalence of measures}\label{s:equiv}
Let $\mathcal T : \vec \xi \mapsto \vec v$ be the mapping giving the
solution to \eqref{vel-vort}.

We write system \eqref{vort} as
\begin{equation}\label{xi-breve}
\begin{cases}d \vec \xi+ (-\Delta)^{1+ c }\vec \xi \ dt 
   + P[(\mathcal T \vec \xi\cdot \nabla)\vec \xi] \ dt -  
     P[(\vec \xi \cdot \nabla) \mathcal T \vec \xi] \ dt=(-\Delta)^{-b}d\vec w
\\
\nabla \cdot \vec \xi=0
\end{cases}
\end{equation}
and system \eqref{burg} as 
\begin{equation}\label{eta-breve}
\begin{cases}
 d\vec \eta+ (-\Delta)^{1+ c }\vec \eta \ dt 
+  P[(\mathcal T\vec \eta\cdot \nabla)\vec \eta] \ dt 
 =(-\Delta)^{-b}d\vec w\\
\nabla \cdot \vec \eta=0
\end{cases}
\end{equation}

Denote by $\mathcal L_{\vec\xi}$ and $\mathcal L_{\vec \eta}$
the laws of the processes $\vec \xi$ and $\vec \eta$ respectively, 
when defined on a finite time interval $[0,T]$.
Let $\sigma_T(\vec \eta)$ denote the $\sigma$-algebra generated by
$\{\vec \eta(t)\}_{0\le t\le T}$. 

We recall the main result of  \cite{F}, \cite{Fgir}, in a form
adapted to our context; indeed in those papers it was sufficient to
assume weak existence (without uniqueness) for system \eqref{xi-breve}.
\begin{thm}\label{gener}
Assume \eqref{eta-breve} and \eqref{xi-breve} have a unique basic solution
with the same initial data in $H^2$.
If 
\begin{equation}\label{stimo-eta}
 \po\{ \textstyle\int_0^T
 \|(-\Delta)^b P[(\vec \eta(t)\cdot \nabla )
      \mathcal T\vec \eta(t)]\|_{L_2}^2dt<\infty \}=1,
\end{equation}
\begin{equation}\label{stimo-xi}
 \po\{ \textstyle\int_0^T
 \|(-\Delta)^b P[(\vec \xi(t)\cdot \nabla )
       \mathcal T \vec \xi(t)]\|_{L_2}^2dt<\infty \}=1,
\end{equation}
then  the laws $\mathcal L_{\vec\xi}$ and $\mathcal L_{\vec \eta}$, 
defined as measures on the Borel subsets of $C([0,T];H^2)$,
are equivalent. 

In particular for 
the Radon-Nykodim derivative we have
\begin{equation}
\label{roz}
 \frac{d\mathcal L_{\vec \xi}}{d\mathcal L_{\vec \eta}}(\vec \eta)=
  \mathbb E\left[ e^{\int_0^T 
  \langle  (-\Delta)^b P[(\vec \eta(t)\cdot \nabla )\mathcal T\vec\eta(t)],
      d\vec w(s)\rangle
   -\frac 12 \int_0^T
  \|(-\Delta)^b P[(\vec \eta(t)\cdot \nabla )\mathcal T\vec\eta(t)]\|_{L_2}^2
ds} \big|\sigma_T(\vec \eta)\right] 
\end{equation}
 $\po$-a.s. 

Finally, 
 $\mathcal L_{\vec\xi}$ is unique.
\end{thm}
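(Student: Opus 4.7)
The strategy is the classical Girsanov change of measure, applied to absorb the stretching drift into the cylindrical noise. The key observation is that \eqref{xi-breve} and \eqref{eta-breve} differ only by the term $-P[(\vec\xi\cdot\nabla)\mathcal T\vec\xi]$, which factors through the covariance $(-\Delta)^{-b}$ of the noise as $-(-\Delta)^{-b}\,[(-\Delta)^b P[(\vec\xi\cdot\nabla)\mathcal T\vec\xi]]$. This is precisely the structure that allows a shift of the cylindrical Wiener process.

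Starting from $\vec\eta$ solving \eqref{eta-breve} on $(\Omega,\mathcal F,\po)$, the plan is first to introduce the real-valued local martingale
\[ M_t := \int_0^t \langle (-\Delta)^b P[(\vec\eta(s)\cdot\nabla)\mathcal T\vec\eta(s)],\,d\vec w(s)\rangle, \]
whose quadratic variation is the integral in \eqref{stimo-eta} and is therefore finite $\po$-a.s. I would then consider the exponential $Z_t := \exp(M_t - \tfrac12\langle M\rangle_t)$ and the stopping times $\tau_n := \inf\{t\le T:\langle M\rangle_t\ge n\}$; for each $n$, Novikov's criterion applied to the bounded stopped process yields $\mathbb E_{\po}[Z_{T\wedge\tau_n}]=1$. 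Condition \eqref{stimo-eta} forces $\tau_n\nearrow T$ $\po$-a.s., and the standard localization argument (as in \cite{F,Fgir}) then gives $\mathbb E_{\po}[Z_T]=1$, so that $d\tilde\po := Z_T\,d\po$ defines a probability measure equivalent to $\po$ on $\mathcal F_T$.

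Next I would invoke the infinite-dimensional Girsanov theorem to conclude that the process
\[ \tilde{\vec w}(t) := \vec w(t) - \int_0^t (-\Delta)^b P[(\vec\eta(s)\cdot\nabla)\mathcal T\vec\eta(s)]\,ds \]
is a cylindrical Wiener process in $L_2$ under $\tilde\po$. Substituting $(-\Delta)^{-b}d\vec w = (-\Delta)^{-b}d\tilde{\vec w} + P[(\vec\eta\cdot\nabla)\mathcal T\vec\eta]\,dt$ into \eqref{eta-breve} shows that $\vec\eta$ solves \eqref{xi-breve} on $(\Omega,\mathcal F,\tilde\po)$ with driving noise $\tilde{\vec w}$ and the same initial datum. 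The pathwise uniqueness of \eqref{xi-breve} given by Theorem \ref{theorem1} then forces the law of $\vec\eta$ under $\tilde\po$ to coincide with $\mathcal L_{\vec\xi}$, so that $\mathcal L_{\vec\xi}(A)=\mathbb E_{\po}[Z_T\mathbf 1_{\{\vec\eta\in A\}}]$ for every Borel set $A\subset C([0,T];H^2)$. This yields $\mathcal L_{\vec\xi}\ll\mathcal L_{\vec\eta}$; taking conditional expectation of $Z_T$ with respect to $\sigma_T(\vec\eta)$, which is necessary because $Z_T$ depends on $\vec w$ and not only on $\vec\eta$, delivers formula \eqref{roz}. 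The reverse inclusion $\mathcal L_{\vec\eta}\ll\mathcal L_{\vec\xi}$ is obtained by the symmetric construction starting from $\vec\xi$, using \eqref{stimo-xi} and the uniqueness for \eqref{eta-breve} from Theorem \ref{eta-completo}. The final uniqueness of $\mathcal L_{\vec\xi}$ follows since any basic solution of \eqref{xi-breve} is recovered from $\vec\eta$ via the shifted measure.

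The main obstacle is justifying that the exponential local martingale $Z$ is in fact a true martingale on $[0,T]$, given only the $\po$-a.s. finiteness of $\langle M\rangle_T$ from \eqref{stimo-eta} rather than a stronger exponential-integrability (Novikov) condition, which one cannot expect to hold here. This is handled by the localization procedure sketched above, combined with the pathwise uniqueness of \eqref{xi-breve} which rules out any loss of mass in the limit. A second technical point is that the stochastic integrand takes values in the infinite-dimensional space $L_2$, so one must apply the Hilbert-space version of Girsanov's theorem (see \cite{dpz}), but this is by now standard.
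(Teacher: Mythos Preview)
The paper does not prove this theorem; it is quoted from \cite{F,Fgir} without argument. Your Girsanov/localization outline is indeed the approach of those references, so in spirit you are on the right track.

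There is, however, a genuine gap in the step where you claim $\mathbb E_{\po}[Z_T]=1$. You write that ``condition \eqref{stimo-eta} forces $\tau_n\nearrow T$ $\po$-a.s., and the standard localization argument \ldots\ then gives $\mathbb E_{\po}[Z_T]=1$'', and later that pathwise uniqueness of \eqref{xi-breve} ``rules out any loss of mass''. But \eqref{stimo-eta} and uniqueness of \eqref{xi-breve} are not enough: almost-sure finiteness of $\langle M\rangle_T$ under $\po$ says nothing about $\tau_n$ under the tilted measures $\tilde\po_n=Z_{T\wedge\tau_n}\po$, and it is precisely $\tilde\po_n(\tau_n>T)\to 1$ that you need. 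The missing ingredient is condition \eqref{stimo-xi}. After you identify, via uniqueness, the law of $\vec\eta$ under $\tilde\po_n$ (up to $\tau_n$) with the law of $\vec\xi$ under $\po$, the event $\{\tau_n\le T\}$ becomes the event that $\int_0^T\|(-\Delta)^bP[(\vec\xi\cdot\nabla)\mathcal T\vec\xi]\|_{L_2}^2dt\ge n$, whose $\po$-probability tends to zero by \eqref{stimo-xi}. Only then do you get $\mathbb E_{\po}[Z_T;\tau_n>T]=\tilde\po_n(\tau_n>T)\to 1$ and hence $\mathbb E_{\po}[Z_T]=1$. In other words, \emph{both} hypotheses \eqref{stimo-eta} and \eqref{stimo-xi} are used in each direction of the equivalence, not one per direction as your write-up suggests; this interlocking is exactly the Liptser--Shiryaev mechanism exploited in \cite{F,Fgir}.
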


From this we get our main result.
\begin{thm}
Let 
\[\begin{cases}
 c > \frac 12\\
b=1
\end{cases}
\]
If $\vec \eta(0)=\vec \xi(0)\in H^2$, then the laws 
$\mathcal L_{\vec\xi}$ and $\mathcal L_{\vec \eta}$
are equivalent 
and \eqref{roz} holds.
\end{thm}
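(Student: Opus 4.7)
The plan is to apply Theorem \ref{gener}, which means checking three things: (a) existence and uniqueness of basic solutions to \eqref{xi-breve} and \eqref{eta-breve} with initial data in $H^2$, and (b) the a.s.\ square-integrability conditions \eqref{stimo-eta} and \eqref{stimo-xi}. With the choice $b=1$ and $c>\tfrac12$ we have $2b+c=2+c>\tfrac52$, so the hypotheses in item iii) of Theorem \ref{eta-completo} (for $\vec\eta$) and of Theorem \ref{theorem1} (for $\vec\xi$, noting also $c\ge\tfrac14$ and $b>\tfrac14$) are all satisfied. This gives unique basic solutions with
\[
\vec\eta,\ \vec\xi\in C([0,T];H^2)\quad\po\text{-a.s.}
\]
starting from the common initial datum in $H^2$, which settles (a).

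For (b), I would reduce both integrability conditions to a single nonlinear estimate in $H^2$. Since $b=1$, the Leray projector $P$ commutes with $(-\Delta)$ on divergence-free fields and is bounded on each $H^s$, one has
\[
\bigl\|(-\Delta)^b P[(\vec u\cdot\nabla)\mathcal T\vec u]\bigr\|_{L_2}
=\bigl\|P[(\vec u\cdot\nabla)\mathcal T\vec u]\bigr\|_{H^2}
\le C\,\bigl\|(\vec u\cdot\nabla)\mathcal T\vec u\bigr\|_{H^2}
\]
for $\vec u=\vec\eta$ or $\vec u=\vec\xi$. The key observation is that in 3D the space $H^2$ is an algebra (since $H^2\subset L_\infty$), so
\[
\bigl\|(\vec u\cdot\nabla)\mathcal T\vec u\bigr\|_{H^2}
\le C\,\|\vec u\|_{H^2}\,\|\nabla\mathcal T\vec u\|_{H^2}
\le C\,\|\vec u\|_{H^2}\,\|\mathcal T\vec u\|_{H^3}
\le C\,\|\vec u\|_{H^2}^2,
\]
where in the last step I use the norm equivalence $\|\vec v\|_{H^3}\sim\|\vec\xi\|_{H^2}$ for $\vec\xi=\nabla\times\vec v$ recalled in Section~\ref{sec:ma}. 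Squaring and integrating yields
\[
\int_0^T\bigl\|(-\Delta)^b P[(\vec u\cdot\nabla)\mathcal T\vec u]\bigr\|_{L_2}^2\,dt
\le C\int_0^T\|\vec u(t)\|_{H^2}^4\,dt
\le C\,T\,\|\vec u\|_{C([0,T];H^2)}^4,
\]
which is $\po$-a.s.\ finite thanks to (a). Applying this with $\vec u=\vec\eta$ and $\vec u=\vec\xi$ gives \eqref{stimo-eta} and \eqref{stimo-xi}, and Theorem \ref{gener} then delivers the equivalence of the laws and the representation \eqref{roz}.

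The only substantive point is the nonlinear estimate above, and it is exactly here that the constraint $c>\tfrac12$ enters: one needs the pathwise $H^2$ regularity of both $\vec\eta$ and $\vec\xi$ from Theorem~\ref{eta-completo} iii) and Theorem~\ref{theorem1} iii), which is available precisely because $2b+c>\tfrac52$ with $b=1$. Everything else is a direct application of the abstract Girsanov result of Theorem \ref{gener}; no further stochastic analysis is needed.
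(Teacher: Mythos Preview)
Your proposal is correct and follows essentially the same route as the paper: invoke Theorems~\ref{eta-completo}~iii) and~\ref{theorem1}~iii) (available since $b=1$, $c>\tfrac12$ give $2b+c>\tfrac52$) to get $C([0,T];H^2)$ paths, then use that $H^2$ is a multiplicative algebra together with $\|\mathcal T\vec u\|_{H^3}\le C\|\vec u\|_{H^2}$ to bound $\|P[(\vec u\cdot\nabla)\mathcal T\vec u]\|_{H^2}\le C\|\vec u\|_{H^2}^2$ and conclude via Theorem~\ref{gener}. Your write-up is in fact slightly more explicit than the paper's (you spell out the time integration and the role of $P$), but the argument is the same.
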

\begin{proof}
We use Theorems  \ref{theorem1}, iii); notice that
the conditions on $b$ and $c$  
are fulfilled if $b=1$ and $c>\frac 12$.
We have only to check 
estimates \eqref{stimo-eta} - \eqref{stimo-xi} with
 $b=1$. This follows easily, since $H^2$ is a multiplicative algebra
and $\|\mathcal T\vec \xi\|_{H^3}\le C \|\vec \xi\|_{H^2}$; therefore
\[
\|P[(\vec \xi\cdot \nabla ) \mathcal T\vec \xi]\|_{H^2}\le
C \|\vec \xi\|_{H^2}\|\nabla \mathcal T\vec \xi\|_{H^2}
\le
C \|\vec \xi\|_{H^2}\|\mathcal T\vec \xi\|_{H^3}
\le C\|\vec \xi\|_{H^2}^2
\]
and finally we use that the paths are in $C([0,T];H^2)$.
\end{proof}

We point out that the restriction $c>\frac 12$ cannot be weakened with
this technique using 
\[
\|(-\Delta)^b P[(\vec \zeta\cdot \nabla )
      \mathcal T\vec \zeta]\|_{L_2}^2
\le\| (\vec \zeta\cdot \nabla )
      \mathcal T\vec \zeta\|_{H^{2b}}^2
\le C\|\vec \zeta\|^2_{H^{2b}}
\]
for $b$ large enough.
Indeed, Proposition \ref{pro-zeta}
provides $\zeta \in C([0,T];H^{2b})$ a.s. if $c>\frac 12$.
And the paths of $\vec \xi, \vec \eta$
cannot  have  better behavior than those of $\vec \zeta$.

\bigskip
{\bf Acknowledgments.} 
The author thanks Franco Flandoli for various stimulating
conversations.

\end{document}